\documentclass[a4paper, 11pt]{article}

\usepackage[margin=1in]{geometry}
\usepackage[T1]{fontenc}
\usepackage{lmodern}
\usepackage{microtype}

\usepackage{graphicx}
\usepackage{appendix}
\usepackage[round]{natbib}
\usepackage[textsize=footnotesize]{todonotes}

\usepackage{algorithm}
\usepackage{algorithmic}

\usepackage{amsmath}
\usepackage{amssymb}
\usepackage{mathtools}
\usepackage{amsthm}
\usepackage{bm}

\DeclareMathOperator*{\argmin}{argmin}
\newcommand{\ttimes}[2]{\times_{#1}^{#2}}

\theoremstyle{plain}
\newtheorem{theorem}{Theorem}[section]
\newtheorem{proposition}[theorem]{Proposition}

\theoremstyle{definition}

\theoremstyle{remark}

\usepackage{booktabs}
\usepackage{siunitx}
\usepackage{multirow}
\usepackage{makecell}
\usepackage[font=small, labelfont=bf]{caption}

\usepackage{subcaption}
\usepackage{enumitem}
\usepackage{enumitem}
\setlist[itemize]{itemsep=3pt, topsep=5pt, parsep=0pt, partopsep=0pt, leftmargin=*}

\usepackage{tikz}
\usetikzlibrary{positioning}

\tikzset{
    tensor/.style={circle, fill=black, inner sep=3.5pt},
    bond/.style={draw, line width=1.5pt},
    diagonal_tensor_r/.style={
        circle,
        draw,
        line width=1.5pt,
        inner sep=3.5pt,
        path picture={
            \draw[bond] (path picture bounding box.south west) -- (path picture bounding box.north east);
        }
    },
    diagonal_tensor_l/.style={
        circle, 
        draw, 
        line width=1.5pt, 
        inner sep=3.5pt,
        path picture={
            \draw[bond] (path picture bounding box.south east) -- (path picture bounding box.north west);
        }
    },
    orthogonal_tensor_r/.style={
        diagonal_tensor_r,
        path picture={
            \fill[black] 
                (path picture bounding box.south west) -- 
                (path picture bounding box.north west) -- 
                (path picture bounding box.north east) -- 
                cycle;
            \draw[bond] (path picture bounding box.south west) -- (path picture bounding box.north east);
        }
    },
    orthogonal_tensor_l/.style={
        diagonal_tensor_l,
        path picture={
            \fill[black] 
                (path picture bounding box.south east) -- 
                (path picture bounding box.north east) -- 
                (path picture bounding box.north west) -- 
                cycle;
            \draw[bond] (path picture bounding box.south east) -- (path picture bounding box.north west);
        }
    }
}

\usepackage[
    colorlinks=true,
    linkcolor=blue,
    citecolor=blue,
    urlcolor=blue,
]{hyperref}

\title{Online TT-ALS for Streaming Tensor Decomposition with Incremental Orthogonalization}
\author{
    Hiroki Takeda\thanks{Department of Pure and Applied Mathematics, The University of Osaka, Japan} \and
    Yuto Miyatake\thanks{D3 Center, The University of Osaka, Japan} \and
    Daisuke Furihata\footnotemark[2]
}
\date{}

\begin{document}
\maketitle

\begin{abstract}
Tensor Train (TT) decomposition is a powerful technique for analyzing high-dimensional data.
Existing algorithms for computing TT decompositions can be categorized into two main types: conventional batch-based approaches and recursive online methods.
In the context of streaming data, batch methods typically achieve higher reconstruction accuracy but often suffer from memory exhaustion, while online methods provide greater computational efficiency.
In this work, we introduce Online TT-ALS (Alternating Least Squares), an algorithm that sequentially enforces orthogonality constraints.
This approach allows for efficient and exact updates of the core tensor while maintaining high reconstruction accuracy.
Theoretically, we prove that enforcing these orthogonal gauge constraints guarantees monotonic decrease of the local objective function and temporal smoothness.
Computationally, our deterministic single-sweep update reduces the rank dependence from quadratic to linear, achieving an overall complexity of $\mathcal{O}(I^{n-1} r)$.
Experimental results demonstrate that the proposed method outperforms existing online techniques not only in terms of mathematical approximation accuracy but also in human perception-based video quality metrics.
Furthermore, compared to recent deep learning-based paradigms, our algebraic approach achieves speedups of several orders of magnitude.
Consequently, our method exhibits high computational efficiency and is suitable for low-latency real-time processing applications.
\end{abstract}

\section{Introduction}
In recent years, the analysis of high-dimensional data has become increasingly critical across a wide range of fields.
High-dimensional data are generated in numerous applications, including image and video processing, recommendation systems, and sensor networks, creating a demand for efficient methods to extract useful information from such data.
Tensor decomposition has emerged as a powerful tool for decomposing high-dimensional data into low-dimensional latent structures and is widely used for dimensionality reduction and feature extraction \citep{Tensor-Decomposition}.
In particular, Tensor Train (TT) decomposition \citep{Tensor-Train-Decomposition} represents a high-order tensor as a chain product of third-order tensors.
A key advantage of this format is that the number of parameters grows linearly with the tensor order, thereby mitigating the curse of dimensionality and facilitating the handling of large-scale tensors.
Owing to this efficient representation capability, the TT format has been successfully applied to practical large-scale problems, such as compressing the massive parameters of deep neural networks \citep{Novikov-NeurIPS2015}.
This demonstrates that the TT format possesses high expressive power, enabling the capture of essential information from high-dimensional data within a highly compact structure.

In many practical scenarios, data are generated continuously as a stream, rendering batch processing inefficient or infeasible.
In the context of matrix analysis, incremental low-rank approximation techniques, such as Incremental PCA \citep{IVT-Ross}, have been well established for tasks such as robust visual tracking.
Similarly, for higher-order tensors, online algorithms for CP and Tucker decompositions have been extensively studied to handle time-evolving data \citep{DTA-Sun, OnlineCP-Mardani}.

Following this trend, online TT decomposition algorithms using recursive update schemes have also been developed to reduce the high computational costs of batch processing \citep{Survey-Streaming-Tensor-Decomposition}.
While these methods enhance real-time processing efficiency,
they often sacrifice reconstruction accuracy compared to batch methods by not enforcing structural constraints such as orthogonality.
This 
can lead to ill-conditioned updates, affecting numerical stability and reconstruction fidelity.
For data exhibiting temporal dependencies, such as video streams, failing to adequately capture correlation information can 
result in blurred footage or inaccurate motion representation.

To overcome these challenges, we propose a new online TT decomposition algorithm for streaming data, which we refer to as Online TT-ALS (Alternating Least Squares).
In a streaming environment, our method imposes orthogonality constraints on each core tensor to streamline matrix computations and updates the cores based on the exact solution of the local optimization subproblems.

The main contributions of this paper are summarized as follows:
\begin{itemize}
    \item \textbf{Theoretical Guarantees:} We establish the theoretical foundation for our online TT decomposition, proving that enforcing orthogonal gauge constraints guarantees monotonic decrease of the local objective function and temporal smoothness.
    \item \textbf{Linear Computational Complexity:} We demonstrate that our deterministic single-sweep update achieves a computational complexity of $\mathcal{O}(I^{n-1} r)$.
    By scaling linearly rather than quadratically with respect to the TT-rank, our method reduces computational overhead compared to existing first-order approximations.
    \item \textbf{Empirical Scalability and Low Latency:} Through extensive experiments, we validate that the proposed method scales to high-order streaming tensors where batch methods fail due to memory exhaustion.
    Furthermore, compared to deep learning-based continuous functional approximations, our algebraic method achieves speedups of several orders of magnitude, providing a highly practical solution for low-latency real-time processing.
\end{itemize}

\subsection{Related Work}
Algorithms for TT decomposition can be broadly categorized into two classes: batch methods and online methods.

The most fundamental batch algorithm is TT-SVD \citep{Tensor-Train-Decomposition}.
This method constructs the TT cores deterministically by performing a sequence of singular value decompositions (SVD) on the unfolding matrices of the input tensor.
While TT-SVD guarantees a quasi-optimal approximation with a theoretical error bound, it requires the entire tensor to be stored in memory and involves computationally intensive full SVD operations.
Consequently, it is often impractical for large-scale high-dimensional data.

To improve computational efficiency for large tensors, iterative batch methods have been developed.
TT-ALS \citep{TT-ALS} is a representative method in this category that computes the TT decomposition based on an Alternating Linear Scheme, which extends the concept of Alternating Least Squares.
TT-ALS efficiently determines the TT decomposition by optimizing each core tensor sequentially while fixing the others.
When updating a specific core, computational efficiency and numerical stability are achieved by imposing left-orthogonality constraints on the left cores and right-orthogonality constraints on the right cores.
Specifically, the algorithm performs a sweeping process, updating each core sequentially from left to right and then from right to left.
This bidirectional sweeping is repeated until a predefined convergence criterion is met.
\citet{TT-ALS} theoretically proved that the cost function decreases monotonically during this iterative process.

The effectiveness of such batch TT decomposition methods has been demonstrated in various computer vision tasks.
For instance, \citet{TT-Video-Completion} employed optimization methods based on the TT format for image and video completion problems, showing that the TT representation can effectively capture global correlations and spatiotemporal features of high-dimensional visual data.
However, this iterative update scheme necessitates storing the entire tensor and making multiple passes over the data.
Consequently, when applied to streaming data or large-scale high-dimensional tensors, the memory requirements become prohibitive, often leading to out-of-memory (OOM) failures and making real-time processing impossible.

To circumvent the limitations of batch processing, online TT decomposition algorithms using recursive update schemes have been proposed.
TT-FOA \citep{TT-FOA} is an online TT decomposition algorithm designed for streaming data.
It employs a recursive scheme to minimize a weighted least squares error, incorporating a forgetting factor to account for historical information.
While TT-FOA successfully reduces the memory footprint compared to batch methods, its computational complexity scales quadratically with respect to the TT-rank (i.e., $\mathcal{O}(I^{n-1} r^2)$).
Furthermore, it typically applies block coordinate descent to update the cores without explicitly maintaining the orthogonal structure of the TT format.
As a result, it suffers from numerical instability due to ill-conditioned Gram matrices and inherently requires a warm-up period for the approximation error to converge.
In applications like video tracking, this lack of structural constraints yields lower reconstruction accuracy than batch methods, resulting in artifacts such as blurring.

Beyond traditional algebraic TT decompositions, deep learning-based continuous functional approximations, such as OFTD \citep{OFTD}, have recently gained traction for streaming tensor representation.
These methods leverage neural networks to model complex spatial-temporal correlations, achieving high performance particularly in tensor completion tasks with missing data.
However, they operate under a fundamentally different computational paradigm.
Updating the network weights for each newly arriving frame requires highly iterative forward and backward passes.
The computational overhead of this backpropagation process renders these methods impractical for dense streaming applications that require immediate state updates.

Compared to these existing approaches, our proposed Online TT-ALS method performs matrix computations efficiently by sequentially enforcing orthogonality constraints and updates the cores using exact solutions to the local optimization problems.

The remainder of this paper is organized as follows.
Section \ref{sec:preliminaries} introduces the fundamental concepts, notation, and necessary background on Tensor Train decomposition and tensor orthogonality.
Section \ref{sec:proposed_method} details our proposed Online TT-ALS algorithm, including the problem formulation and the exact core update process via sequential orthogonalization.
Section \ref{sec:theoretical_and_computational_analysis} establishes the theoretical foundation of our approach, providing proofs for monotonic decrease of the local objective function and an analysis of the linear computational complexity.
Section \ref{sec:experiments} presents comprehensive experimental results, validating the scalability, long-term tracking stability, and low-latency performance of our method against existing baselines.
Finally, Section \ref{sec:conclusion} concludes the paper and outlines potential directions for future research.

\section{Preliminaries}
\label{sec:preliminaries}
\subsection{Notation}
Scalars are denoted by lowercase letters (e.g., $a, b, c$), vectors by bold lowercase letters (e.g., $\bm{a}, \bm{b}, \bm{c}$), matrices by bold uppercase letters (e.g., $\mathbf{A}, \mathbf{B}, \mathbf{C}$), and tensors by calligraphic letters (e.g., $\mathcal{A}, \mathcal{B}, \mathcal{C}$).
The Frobenius norm and the Euclidean $\ell_2$-norm are denoted by $\|\cdot\|_\mathrm{F}$ and $\|\cdot\|_2$, respectively.

For a tensor $\mathcal{X} \in \mathbb{R}^{I_1 \times I_2 \times \cdots \times I_n}$, mode permutation is defined as an operation that rearranges the order of the tensor modes.
For instance, applying a mode permutation that swaps mode 1 and mode 3 to $\mathcal{X} \in \mathbb{R}^{I_1 \times I_2 \times I_3}$ yields $\mathcal{X}_{(3,2,1)} \in \mathbb{R}^{I_3 \times I_2 \times I_1}$.
Specifically, the elements satisfy the following relationship: $\mathcal{X}_{(3,2,1)}(i_3, i_2, i_1) = \mathcal{X}(i_1, i_2, i_3)$.

A tensor can be 
represented in matrix form via matricization (also known as unfolding).
For example, combining the first two modes of $\mathcal{X} \in \mathbb{R}^{I_1 \times I_2 \times I_3}$ into the row indices yields a matrix $\mathbf{X} \in \mathbb{R}^{(I_1 I_2) \times I_3}$, where
\begin{equation*}
\mathbf{X}(j, k) = \mathcal{X}(i_1, i_2, i_3), \quad j = i_1 + (i_2 - 1)I_1, \quad k = i_3,
\end{equation*}
for $1 \leq j \leq I_1 I_2$ and $1 \leq k \leq I_3$.
The original tensor can be recovered by decomposing the row index $j$, an operation referred to as tensorization (also known as folding).

Given two tensors $\mathcal{A} \in \mathbb{R}^{I_1 \times I_2 \times \cdots \times I_n}$ and $\mathcal{B} \in \mathbb{R}^{J_1 \times J_2 \times \cdots \times J_m}$ with $I_n = J_1$, the mode product (or contraction) $\mathcal{C} = \mathcal{A} \ttimes{n}{1} \mathcal{B} \in \mathbb{R}^{I_1 \times \cdots \times I_{n-1} \times J_2 \times \cdots \times J_m}$ is defined by
\begin{equation*}
\begin{split}
&\mathcal{C}(i_1, \ldots, i_{n-1}, j_2, \ldots, j_m) \\
&\quad = \sum_{s=1}^{I_n} \mathcal{A}(i_1, \ldots, i_{n-1}, s) \mathcal{B}(s, j_2, \ldots, j_m).
\end{split}
\end{equation*}
Mode products for other modes can be defined similarly.
When the modes involved in the product are clear from context, the notation may be simplified to $\mathcal{C} = \mathcal{A} \times \mathcal{B}$.

Furthermore, for tensors $\mathcal{A} \in \mathbb{R}^{I_1 \times I_2 \times \cdots \times I_n}$ and $\mathcal{B} \in \mathbb{R}^{I_1 \times I_2 \times \cdots \times I_{n-1} \times 1}$, the concatenation $\mathcal{C} = \mathcal{A} \boxplus \mathcal{B}$ is defined as
\begin{equation*}
\begin{split}
&\mathcal{C}(i_1, i_2, \ldots, i_{n-1}, i_n) \\
&\quad = \begin{cases}
\mathcal{A}(i_1, i_2, \ldots, i_{n-1}, i_n) & (1 \leq i_n \leq I_n), \\
\mathcal{B}(i_1, i_2, \ldots, i_{n-1}, 1) & (i_n = I_n + 1).
\end{cases}
\end{split}
\end{equation*}
This operation increases the size of the $n$-th mode, resulting in $\mathcal{C} \in \mathbb{R}^{I_1 \times I_2 \times \cdots \times I_{n-1} \times (I_n + 1)}$.

\subsection{Tensor Train Decomposition}
Let $\mathcal{X} \in \mathbb{R}^{I_1 \times I_2 \times \cdots \times I_n}$ be an $n$-th order tensor.
The TT decomposition expresses $\mathcal{X}$ as a chain product of third-order tensors: 
\begin{equation*}
\mathcal{X} = \mathcal{G}_1 \ttimes{2}{1} \mathcal{G}_2 \ttimes{3}{1} \cdots \ttimes{n}{1} \mathcal{G}_n,
\end{equation*}
where $\mathcal{G}_k \in \mathbb{R}^{r_{k-1} \times I_k \times r_k}$ is the $k$-th TT core with the usual assumption that $r_0 = r_n = 1$.
The tuple $(r_0, r_1, \ldots, r_n)$, or simply $(r_1, \ldots, r_{n-1})$, is referred to as the TT-rank.
In this paper, we employ tensor diagrams to visualize tensor operations.
For example, Figure~\ref{fig:tensor-diagram-ttd} illustrates the TT decomposition.
\begin{figure}[t]
\centering
\begin{tikzpicture}[node distance=0.7cm, on grid]
\node[tensor, label={above: \scriptsize{$\mathcal{G}_1$}}] (G1) {};
\node[tensor, right=of G1, label={above: \scriptsize{$\mathcal{G}_2$}}] (G2) {};
\node[right=0.7cm of G2] (dots) {\large $\cdots$};
\node[tensor, right=0.7cm of dots, label={above: \scriptsize{$\mathcal{G}_{n-1}$}}] (Gn-1) {};
\node[tensor, right=of Gn-1, label={above: \scriptsize{$\mathcal{G}_n$}}] (Gn) {};
\draw[bond] (G1) -- (G2) -- (dots) -- (Gn-1) -- (Gn);
\foreach \n in {G1, G2, Gn-1, Gn} {
\draw[bond] (\n.south) -- +(0,-0.3);
}
\end{tikzpicture}
\caption{Tensor diagram of TT decomposition.}
\label{fig:tensor-diagram-ttd}
\end{figure}
In this graphical notation, nodes represent tensors, while edges represent tensor modes.
A connected edge between two nodes indicates a tensor contraction, or a mode product, along with the corresponding indices.

\subsection{Streaming Tensors}
A streaming tensor is defined as a tensor to which data is sequentially appended over time.
Specifically, let us consider a tensor $\mathcal{X}[t] \in \mathbb{R}^{I_1 \times I_2 \times \cdots \times I_{n-1} \times I_n[t]}$.
In this model, the data grows along the $n$-th mode over time, where $I_n[t]$ denotes the size of the $n$-th mode at time step $t$.
Let $\overline{\mathcal{X}}_t \in \mathbb{R}^{I_1 \times I_2 \times \cdots \times I_{n-1}}$ denote the new data slice arriving at time $t$.
The tensor $\mathcal{X}[t]$ can then be expressed as:
\begin{equation*}
\mathcal{X}[t] = \mathcal{X}[t-1] \boxplus \overline{\mathcal{X}}_t.
\end{equation*}
In this paper, we assume that $I_n[t] = I_n[t-1] + 1$, implying that a single slice of data arrives at each time step.

\subsection{Tensor Orthogonality}
This section introduces the concept of orthogonality for third-order tensors, which is essential for the subsequent discussions.
Let $\mathcal{X} \in \mathbb{R}^{P \times Q \times R}$ be a third-order tensor.
The tensor $\mathcal{X}$ is said to be left- or right-orthogonal if its matricization $\mathbf{X} \in \mathbb{R}^{(P Q) \times R}$ or $\mathbf{X} \in \mathbb{R}^{P \times (Q R)}$, respectively, satisfies
\begin{equation*}
\mathbf{X}^\top \mathbf{X} = I \in \mathbb{R}^{R \times R}
\quad \text{or, resp.,} \quad
\mathbf{X} \mathbf{X}^\top = I \in \mathbb{R}^{P \times P}.
\end{equation*}
Figure~\ref{fig:tensor-diagram-orthogonality} illustrates the left- and right-orthogonality using tensor diagrams.
\begin{figure}[t]
  \centering
  \begin{tikzpicture}
    \node[orthogonal_tensor_l, label={above:Left-orth.}] (X) at (0,0) {};
    \draw[bond] (X.west) -- (-0.5,0);
    \draw[bond] (X.east) -- (0.5,0);
    \draw[bond] (X.south) -- (0,-0.5);
    \node[orthogonal_tensor_r, label={above:Right-orth.}] (Y) at (2,0) {};
    \draw[bond] (Y.west) -- (1.5,0);
    \draw[bond] (Y.east) -- (2.5,0);
    \draw[bond] (Y.south) -- (2,-0.5);
  \end{tikzpicture}
\caption{Tensor diagrams for left- and right-orthogonality.}
\label{fig:tensor-diagram-orthogonality}
\end{figure}

\section{Proposed Method}
\label{sec:proposed_method}
In this paper, we propose Online TT-ALS (Alternating Least Squares), a Tensor Train decomposition algorithm for streaming data.
Our method imposes orthogonality constraints on the TT cores to avoid the computational bottlenecks associated with matrix operations in existing online methods.
As a result, this approach allows for more efficient updates of the cores.
The proposed method can be viewed as an online extension of the batch TT-ALS algorithm.

\subsection{Problem Formulation}
Let $\mathcal{X}[t] \in \mathbb{R}^{I_1 \times I_2 \times \cdots \times I_{n-1} \times I_n[t]}$ denote the streaming tensor at time $t$.
We express the TT decomposition of $\mathcal{X}[t]$ as follows:
\begin{equation*}
  \mathcal{X}[t] = \mathcal{G}_1[t] \ttimes{2}{1} \mathcal{G}_2[t] \ttimes{3}{1} \cdots \ttimes{n}{1} \mathcal{G}_n[t],
\end{equation*}
where $\mathcal{G}_k[t] \in \mathbb{R}^{r_{k-1} \times I_k \times r_k}$ is the $k$-th TT core at time $t$.
We assume the boundary conditions $r_0 = r_n = 1$.
Here, for $1 \leq k \leq n-1$, the dimensions of the TT cores $\mathcal{G}_k[t] \in \mathbb{R}^{r_{k-1} \times I_k \times r_k}$ remain fixed over time, whereas the size of the last core $\mathcal{G}_n[t] \in \mathbb{R}^{r_{n-1} \times I_n[t] \times 1}$ grows with time $t$.
These TT cores are estimated by solving the following optimization problem:
\begin{equation}
    \begin{split}
        & \{\mathcal{G}_1[t], \mathcal{G}_2[t], \ldots, \mathcal{G}_n[t]\} \\
        & \quad = \argmin_{\{\mathcal{G}_k\}_{k=1}^{n}} \left\| \mathcal{X}[t] - \mathcal{G}_1 \ttimes{2}{1} \mathcal{G}_2 \ttimes{3}{1} \cdots \ttimes{n}{1} \mathcal{G}_n \right\|_\mathrm{F}^2. 
    \end{split}\label{eq:proposed-objective-full}
\end{equation}
By considering the constituent data slices,
the minimization problem~\eqref{eq:proposed-objective-full} can be rewritten as:
\begin{equation}
    \begin{split}
        &\!\!\!\!\{\mathcal{G}_1[t], \mathcal{G}_2[t], \ldots, \mathcal{G}_n[t]\} \\
        &= \argmin_{\{\mathcal{G}_k\}_{k=1}^{n}} \sum_{i=1}^{I_n[t]} \left\| \overline{\mathcal{X}}_i - \mathcal{G}_1 \ttimes{2}{1} \mathcal{G}_2 \ttimes{3}{1} \cdots \ttimes{n}{1} \bm{g}_n[i] \right\|_\mathrm{F}^2, 
    \end{split}\label{eq:proposed-objective-slice}
\end{equation}
where $\bm{g}_n[i] \in \mathbb{R}^{r_{n-1}}$ denotes the $i$-th column of $\mathcal{G}_n \in \mathbb{R}^{r_{n-1} \times I_n[t]}$. 
Note that in the minimization problem~\eqref{eq:proposed-objective-slice}, $\overline{\mathcal{X}}_i \in \mathbb{R}^{I_1 \times I_2 \times \cdots \times I_{n-1}}$ denotes the $i$-th data slice.

However, in a streaming environment, retaining data slices from all past time steps is often infeasible due to memory constraints.
Consequently, minimizing the sum over all terms in \eqref{eq:proposed-objective-slice} is computationally impractical.

\subsection{Key Idea of Online TT-ALS}
Let us describe the key idea underlying the proposed online TT-ALS method.
We focus on the term corresponding to the current time $t$ in the minimization problem~\eqref{eq:proposed-objective-slice}.
Specifically, given the data slice $\overline{\mathcal{X}}_t$ arriving at time $t$, we aim to sequentially minimize the following objective function for $k=1, 2, \ldots, n$, while keeping all other TT cores fixed:
\begin{equation}
    f(\mathcal{G}_k) = \frac{1}{2}\left\| \overline{\mathcal{X}}_t - \mathcal{A}_k[t] \ttimes{k}{1} \mathcal{G}_k \ttimes{k+1}{1} \mathcal{B}_k[t-1] \right\|_\mathrm{F}^2. \label{eq:proposed-objective}
\end{equation}
Here, the terms $\mathcal{A}_k[t] \in \mathbb{R}^{I_1 \times I_2 \times \cdots \times I_{k-1} \times r_{k-1}}$ and $\mathcal{B}_k[t-1] \in \mathbb{R}^{r_k \times I_{k+1} \times I_{k+2} \times \cdots \times I_n[t-1]}$ are defined as follows:
\begin{align*}
    \mathcal{A}_k[t] &= \mathcal{G}_1[t] \ttimes{2}{1} \mathcal{G}_2[t] \ttimes{3}{1} \cdots \ttimes{k-1}{1} \mathcal{G}_{k-1}[t], \\
    \begin{split}
        \mathcal{B}_k[t-1] &= \mathcal{G}_{k+1}[t-1] \ttimes{3}{1} \mathcal{G}_{k+2}[t-1] \ttimes{4}{1} \cdots \\
        &\phantom{=}\quad \ttimes{n-k}{1} \mathcal{G}_{n-1}[t-1] \ttimes{n-k+1}{1} \bm{g}_n[t-1].
    \end{split}
\end{align*}
The tensors $\mathcal{A}_k[t]$ and $\mathcal{B}_k[t-1]$ represent the contractions of the TT cores preceding and succeeding the $k$-th core, respectively.
We define the boundary terms $\mathcal{A}_1[t]$ and $\mathcal{B}_n[t-1]$ as scalars equal to 1.
Since our method updates the cores sequentially from $k=1$ to $n$, $\mathcal{A}_k[t]$ is constructed using the updated cores at time $t$, whereas $\mathcal{B}_k[t-1]$ is constructed using the cores from the previous time step $t-1$.

By incorporating the core group $\mathcal{B}_k[t-1]$ from time $t-1$ into the objective function, we can efficiently update each core while retaining past information, without the need to explicitly process all past data slices as required in the minimization problem~\eqref{eq:proposed-objective-slice}.
In the proposed approach, we update each core $\{\mathcal{G}_k[t]\}_{k=1}^{n-1}$ and $\bm{g}_n[t]$ by minimizing this objective function $f(\mathcal{G}_k)$ sequentially for $k = 1, 2, \ldots, n$.

\subsection{Orthogonality Constraints}
Let $\widehat{\mathcal{X}}_t \in \mathbb{R}^{(I_1 I_2 \cdots I_{k-1}) \times I_k \times (I_{k+1} I_{k+2} \cdots I_n[t-1])}$ be the reshaped third-order tensor of the data slice $\overline{\mathcal{X}}_t$.
Let $\mathbf{A}_k[t] \in \mathbb{R}^{(I_1 I_2 \cdots I_{k-1}) \times r_{k-1}}$ and $\mathbf{B}_k[t-1] \in \mathbb{R}^{r_k \times (I_{k+1} I_{k+2} \cdots I_n[t-1])}$ be the matricizations of $\mathcal{A}_k[t]$ and $\mathcal{B}_k[t-1]$, respectively.
The reshaped tensor $\widehat{\mathcal{X}}_t$ depends on $k$, but we treat it as an operator acting on matrices.
As a result, there is no need to allocate memory to store this tensor explicitly during computation; the original tensor can be used directly whenever its elements are accessed.
For this reason, we omit the index $k$ to avoid redundant notation.
Then, the objective function \eqref{eq:proposed-objective} can be rewritten as
\begin{equation}
    f(\mathcal{G}_k) = \frac{1}{2} \left\| \widehat{\mathcal{X}}_t - \mathbf{A}_k[t] \ttimes{k}{1} \mathcal{G}_k \ttimes{k+1}{1} \mathbf{B}_k[t-1] \right\|_\mathrm{F}^2. \label{eq:proposed-objective-matrix}
\end{equation}
Minimizing \eqref{eq:proposed-objective-matrix} with respect to $\mathcal{G}_k$ leads to a least-squares problem, which requires computing the inverses of the Gram matrices $\mathbf{A}_k[t]^\top \mathbf{A}_k[t]$ and $\mathbf{B}_k[t-1] \mathbf{B}_k[t-1]^\top$.
However, computing the inverses of these dense matrices at every step of the streaming process is not only computationally expensive but also potentially prone to numerical instability.
Regarding numerical instability, we observe that the condition numbers of these matrices can reach the order of $10^4$ to $10^5$ in the subsequent experiments, and may be even higher in other scenarios.
At such scales, computations in single precision, and especially in half precision, may be significantly affected by numerical errors.

To circumvent this issue, the existing batch method, TT-ALS \citep{TT-ALS}, imposes orthogonality constraints on the TT cores.
Inspired by this approach, our method introduces constraints to maintain the orthogonality of each core in the streaming setting.
This avoids the matrix inversion bottleneck and enables efficient core updates.
Specifically, during the update of the $k$-th core, we impose left-orthogonality constraints on the preceding cores $\{\mathcal{G}_i[t]\}_{i=1}^{k-1}$ and right-orthogonality constraints on the succeeding cores $\{\mathcal{G}_i[t-1]\}_{i=k+1}^{n-1}$ and $\bm{g}_n[t-1]$.
Under these orthogonality constraints, the following properties hold:
\begin{equation*}
    \mathbf{A}_k[t]^\top \mathbf{A}_k[t] = \mathbf{I}_{r_{k-1}}, \quad \mathbf{B}_k[t-1] \mathbf{B}_k[t-1]^\top = \mathbf{I}_{r_k}.
\end{equation*}

Under these constraints, the gradient of \eqref{eq:proposed-objective-matrix} with respect to $\mathcal{G}_k$ is found to be
\begin{align*}
  \frac{\partial f}{\partial \mathcal{G}_k} 
  &= \begin{multlined}[t]
        -\mathbf{A}_k[t]^\top \ttimes{2}{1} \widehat{\mathcal{X}}_t \ttimes{3}{1} \mathbf{B}_k[t-1]^\top \\
        + \mathbf{A}_k[t]^\top \mathbf{A}_k[t] \ttimes{2}{1} \mathcal{G}_k \ttimes{3}{1} \mathbf{B}_k[t-1] \mathbf{B}_k[t-1]^\top 
      \end{multlined} \\
  &= -\mathbf{A}_k[t]^\top \ttimes{2}{1} \widehat{\mathcal{X}}_t \ttimes{3}{1} \mathbf{B}_k[t-1]^\top + \mathcal{G}_k.
\end{align*}
Therefore, the optimal $\mathcal{G}_k$ that minimizes $f(\mathcal{G}_k)$ is given by
\begin{equation}
  \mathcal{G}_k = \mathbf{A}_k[t]^\top \ttimes{2}{1} \widehat{\mathcal{X}}_t \ttimes{3}{1} \mathbf{B}_k[t-1]^\top. \label{eq:proposed-core-update}
\end{equation}
The algorithm used to preserve these orthogonality constraints throughout the update process is described in the next subsection.

\subsection{Algorithm Description}
The proposed method is summarized in Algorithm~\ref{alg:online-tt-als}.
First, we initialize the TT cores $\{\mathcal{G}_k[0]\}_{k=1}^{n-1}$ and $\bm{g}_n[0]$ with random values.
Below, we focus on the update procedure at time $t$ and describe how orthogonality is maintained.

\subsubsection{Right-Orthogonalization}
We perform right-orthogonalization on the cores $\{\mathcal{G}_k[t-1]\}_{k=1}^{n-1}$ and $\bm{g}_n[t-1]$ in the reverse order $k=n, n-1, \ldots, 2$.
Specifically, we first normalize $\bm{g}_n[t-1]$ and absorb its norm into the left-neighboring core $\mathcal{G}_{n-1}[t-1]$:
\begin{align*}
    \mathcal{G}_{n-1}[t-1] &\leftarrow \|\bm{g}_n[t-1]\|_2 \cdot \mathcal{G}_{n-1}[t-1], \\
    \bm{g}_n[t-1] &\leftarrow \frac{\bm{g}_n[t-1]}{\|\bm{g}_n[t-1]\|_2}.
\end{align*}
Next, we matricize the updated $\mathcal{G}_{n-1}[t-1]$ into $\mathbf{G}_{n-1}[t-1] \in \mathbb{R}^{r_{n-2} \times I_{n-1} r_{n-1}}$ and perform an LQ decomposition:
\begin{equation*}
    \mathbf{G}_{n-1}[t-1] = \mathbf{L} \mathbf{Q},
\end{equation*}
where $\mathbf{L} \in \mathbb{R}^{r_{n-2} \times r_{n-2}}$ is a lower triangular matrix, and $\mathbf{Q} \in \mathbb{R}^{r_{n-2} \times I_{n-1} r_{n-1}}$ satisfies $\mathbf{Q} \mathbf{Q}^\top = \mathbf{I}_{r_{n-2}}$.
Note that the LQ decomposition can be seen as the QR decomposition of the transpose of a matrix \citep{Matrix-Computations}.
Subsequently, we reshape the matrix $\mathbf{Q}$ back into a tensor of size $r_{n-2} \times I_{n-1} \times r_{n-1}$ to update $\mathcal{G}_{n-1}[t-1]$, and absorb $\mathbf{L}$ into the left-neighboring core $\mathcal{G}_{n-2}[t-1]$:
\begin{equation*}
    \mathcal{G}_{n-2}[t-1] \leftarrow \mathcal{G}_{n-2}[t-1] \ttimes{3}{1} \mathbf{L}.
\end{equation*}
This operation is repeated for $\mathcal{G}_{n-2}[t-1], \mathcal{G}_{n-3}[t-1], \ldots, \mathcal{G}_2[t-1]$, to complete the right-orthogonalization process (Figure~\ref{fig:tensor-diagram-right-ortho}).

\begin{algorithm}[!t]
\caption{Online TT-ALS}
\label{alg:online-tt-als}
\begin{algorithmic}[1]
\REQUIRE Data stream $\{\overline{\mathcal{X}}_t\}_{t=1}^T$, TT-ranks $\mathbf{r}$
\ENSURE Sequence of TT-decompositions $\{\{\mathcal{G}_k[t]\}_{k=1}^{n-1}, \bm{g}_n[t]\}_{t=1}^T$

\STATE Initialize orthogonal cores $\{\mathcal{G}_k\}_{k=1}^{n-1}$ and $\bm{g}_n$ randomly
\FOR{$t = 1$ to $T$}
    \STATE Observe current data slice $\overline{\mathcal{X}}_t$
    
    \STATE \textit{// 1. Right Orthogonalization}
    \STATE Normalize $\bm{g}_n$ and absorb norm into $\mathcal{G}_{n-1}$
    \FOR{$k = n-1$ down to $2$}
        \STATE $[\mathbf{L}, \mathbf{Q}] \leftarrow \text{LQ}(\mathcal{G}_k)$
        \STATE $\mathcal{G}_k \leftarrow \text{Fold}(\mathbf{Q})$
        \STATE $\mathcal{G}_{k-1} \leftarrow \mathcal{G}_{k-1} \ttimes{3}{1} \mathbf{L}$
    \ENDFOR

    \STATE \textit{// 2. Update Interfaces (Right side)}
    \STATE $\mathbf{B}_{n-1} \leftarrow \bm{g}_n$
    \FOR{$k = n-2$ down to $1$}
        \STATE $\mathbf{B}_k \leftarrow \mathcal{G}_{k+1} \ttimes{3}{1} \mathbf{B}_{k+1}$
    \ENDFOR

    \STATE \textit{// 3. Forward Update \& Left Orthogonalization}
    \STATE Initialize left interface $\mathbf{A}_{1} \leftarrow 1$
    \FOR{$k = 1$ to $n-1$}
        \STATE $\tilde{\mathcal{G}}_{k} \leftarrow (\mathbf{A}_{k})^\top \times \overline{\mathcal{X}}_t \times (\mathbf{B}_{k})^\top$ \quad \textit{// Local Solve}
        
        \STATE $[\mathbf{Q}, \mathbf{R}] \leftarrow \text{QR}(\tilde{\mathcal{G}}_{k})$
        \STATE $\mathcal{G}_{k} \leftarrow \text{Fold}(\mathbf{Q})$ \quad \textit{// Update Core}
        
        \IF{$k < n-1$}
            \STATE $\mathcal{G}_{k+1} \leftarrow \mathbf{R} \ttimes{2}{1} \mathcal{G}_{k+1}$ \quad \textit{// Propagate Error}
        \ELSE
            \STATE $\bm{g}_{n} \leftarrow \mathbf{R} \bm{g}_{n}$
        \ENDIF
        
        \STATE Update $\mathbf{A}_{k+1} \leftarrow \mathbf{A}_{k} \ttimes{k}{1} \mathcal{G}_{k}$ \quad \textit{// Update Left Interface}
    \ENDFOR
    
    \STATE $\bm{g}_{n} \leftarrow (\mathbf{A}_{n})^\top \cdot \text{vec}(\overline{\mathcal{X}}_t)$ \quad \textit{// Update Weight}

    \STATE \textbf{Store/Output} current decomposition $\{\mathcal{G}_{k}\}_{k=1}^{n-1}$ and $\bm{g}_{n}$ as time-$t$ result
\ENDFOR
\end{algorithmic}
\end{algorithm}

\begin{figure}[t]
  \centering
  \begin{tikzpicture}[node distance=0.7cm, on grid]
    \node[tensor, label={above: \scriptsize{$\mathcal{G}_1$}}] (G1) {};
    \node[orthogonal_tensor_r, right=of G1, label={above: \scriptsize{$\mathcal{G}_2$}}] (G2) {};
    \node[right=0.7cm of G2] (dots) {\large $\cdots$};
    \node[orthogonal_tensor_r, right=0.7cm of dots, label={above: \scriptsize{$\mathcal{G}_{n-1}$}}] (Gn-1) {};
    \node[orthogonal_tensor_r, right=of Gn-1, label={above: \scriptsize{$\bm{g}_n$}}] (Gn) {};
    \draw[bond] (G1) -- (G2) -- (dots) -- (Gn-1) -- (Gn);
    \foreach \n in {G1, G2, Gn-1} {
        \draw[bond] (\n.south) -- +(0,-0.3);
    }
  \end{tikzpicture}
  \caption{Tensor diagram after right-orthogonalization.}
  \label{fig:tensor-diagram-right-ortho}
\end{figure}

\subsubsection{Core Update and Left-Orthogonalization}
Next, we sequentially update each core in the order $k=1, 2, \ldots, n$ while simultaneously applying left-orthogonalization to the updated cores.
First, the contraction of the right-side cores, $\mathcal{B}_k[t-1]$, is precomputed for each $k=1, 2, \ldots, n$.
Then, starting with $k=1$, we update $\mathcal{G}_1[t]$ based on \eqref{eq:proposed-core-update}.
Since $\mathbf{A}_1[t] = 1$, the update is given by
\begin{equation*}
    \mathcal{G}_1[t] = \widehat{\mathcal{X}}_t \times \mathbf{B}_1[t-1]^\top.
\end{equation*}
Next, we matricize the updated $\mathcal{G}_1[t]$ into $\mathbf{G}_1[t] \in \mathbb{R}^{r_0 I_1 \times r_1}$ and perform a QR decomposition:
\begin{equation*}
    \mathbf{G}_1[t] = \mathbf{Q} \mathbf{R},
\end{equation*}
where $\mathbf{Q} \in \mathbb{R}^{r_0 I_1 \times r_1}$ satisfies $\mathbf{Q}^\top \mathbf{Q} = \mathbf{I}_{r_0 I_1}$, and $\mathbf{R} \in \mathbb{R}^{r_1 \times r_1}$ is upper triangular.
The matrix $\mathbf{Q}$ is then reshaped into a tensor of size $r_0 \times I_1 \times r_1$ to update $\mathcal{G}_1[t]$, and $\mathbf{R}$ is absorbed into the right-neighboring core $\mathcal{G}_2[t]$:
\begin{equation*}
    \mathcal{G}_2[t] \leftarrow \mathbf{R} \ttimes{2}{1} \mathcal{G}_2[t-1].
\end{equation*}
Using the resulting $\mathcal{G}_1[t]$, we then compute $\mathcal{A}_2[t]$ as follows:
\begin{equation*}
    \mathcal{A}_2[t] = \mathcal{A}_1[t] \times \mathcal{G}_1[t] = \mathcal{G}_1[t].
\end{equation*}
This procedure is repeated for $\mathcal{G}_2[t], \mathcal{G}_3[t], \ldots, \mathcal{G}_{n-1}[t]$, as illustrated in Figure~\ref{fig:tensor-diagram-updating}.
Finally, we update $\bm{g}_n[t]$ based on \eqref{eq:proposed-core-update}:
\begin{equation*}
    \bm{g}_n[t] = \mathbf{A}_n[t]^\top \times \widehat{\mathcal{X}}_t.
\end{equation*}
Note that here, $\mathbf{B}_n[t-1]$ is a scalar equal to 1.
This completes the update procedure at time $t$.

\begin{figure}[t]
  \centering
  \begin{tikzpicture}[node distance=0.7cm, on grid]
    \node[orthogonal_tensor_l, label={above: \scriptsize{$\mathcal{G}_1$}}] (G1) {};
    \node[orthogonal_tensor_l, right=of G1, label={above: \scriptsize{$\mathcal{G}_2$}}] (G2) {};
    \node[right=0.7cm of G2] (dots1) {\large $\cdots$};
    \node[orthogonal_tensor_l, right=0.7cm of dots1, label={above: \scriptsize{$\mathcal{G}_{k-1}$}}] (Gk-1) {};
    \node[tensor, right=of Gk-1, label={above: \scriptsize{$\mathcal{G}_{k}$}}] (Gk) {};
    \node[orthogonal_tensor_r, right=of Gk, label={above: \scriptsize{$\mathcal{G}_{k+1}$}}] (Gk+1) {};
    \node[right=0.7cm of Gk+1] (dots2) {\large $\cdots$};
    \node[orthogonal_tensor_r, right=0.7cm of dots2, label={above: \scriptsize{$\mathcal{G}_{n-2}$}}] (Gn-2) {};
    \node[orthogonal_tensor_r, right=of Gn-2, label={above: \scriptsize{$\mathcal{G}_{n-1}$}}] (Gn-1) {};
    \node[orthogonal_tensor_r, right=of Gn-1, label={above: \scriptsize{$\bm{g}_n$}}] (Gn) {};
    \draw[bond] (G1) -- (G2) -- (dots1) -- (Gk-1) -- (Gk) -- (Gk+1) -- (dots2) -- (Gn-2) -- (Gn-1) -- (Gn);
    \foreach \n in {G1, G2, Gk-1, Gk, Gk+1, Gn-2, Gn-1} {
        \draw[bond] (\n.south) -- +(0,-0.3);
    }
  \end{tikzpicture}
  \caption{Tensor diagram during the update of $\mathcal{G}_k$.}
  \label{fig:tensor-diagram-updating}
\end{figure}

\section{Theoretical and Computational Analysis}
\label{sec:theoretical_and_computational_analysis}
This section analyzes the proposed Online TT-ALS in terms of theoretical stability and computational efficiency.
By rigorously enforcing orthogonal gauge constraints, our method simultaneously guarantees monotonic decrease of the local objective function (Section~\ref{subsec:theoretical_analysis}) and significantly reduces computational complexity without randomized approximations (Section~\ref{subsec:computational_complexity}).

\subsection{Theoretical Analysis}
\label{subsec:theoretical_analysis}
We establish the theoretical foundation of the proposed method.
Let $f(\mathcal{G}_k)$ be the objective function defined in \eqref{eq:proposed-objective-matrix}.
To analyze the local optimization process, let $\mathcal{G}_k^{(\text{old})}$ denote the state of the $k$-th core tensor prior to the update, and let $\mathcal{G}_k^\star$ denote the newly updated core tensor obtained via \eqref{eq:proposed-core-update}.

\begin{proposition}[Monotonicity]
    \label{prop:monotonic_convergence}
    Under the orthogonality constraints $\mathbf{A}_k[t]^\top \mathbf{A}_k[t] = \mathbf{I}_{r_{k-1}}$ and $\mathbf{B}_k[t-1] \mathbf{B}_k[t-1]^\top = \mathbf{I}_{r_k}$, the update from $\mathcal{G}_k^{(\text{old})}$ to $\mathcal{G}_k^\star$ ensures $f(\mathcal{G}_k^\star) \leq f(\mathcal{G}_k^{(\text{old})})$.
\end{proposition}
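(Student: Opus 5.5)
The plan is to show that $\mathcal{G}_k^\star$ is a global minimizer of $f$ over all cores of size $r_{k-1}\times I_k\times r_k$. The inequality $f(\mathcal{G}_k^\star)\le f(\mathcal{G}_k^{(\text{old})})$ then follows at once, because $\mathcal{G}_k^{(\text{old})}$ is one admissible candidate. It is convenient to write the map $\mathcal{L}(\mathcal{G}) = \mathbf{A}_k[t]\ttimes{k}{1}\mathcal{G}\ttimes{k+1}{1}\mathbf{B}_k[t-1]$, so that $f(\mathcal{G}) = \tfrac12\|\widehat{\mathcal{X}}_t - \mathcal{L}(\mathcal{G})\|_\mathrm{F}^2$. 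After matricizing the middle index, $\mathcal{L}$ acts as $\mathbf{G}\mapsto (\mathbf{B}_k^\top\otimes\mathbf{A}_k)\,\mathrm{vec}(\mathbf{G})$ on the vectorized core. Its adjoint is $\mathcal{L}^*(\mathcal{Y}) = \mathbf{A}_k[t]^\top\ttimes{2}{1}\mathcal{Y}\ttimes{3}{1}\mathbf{B}_k[t-1]^\top$.

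First I will show that $\mathcal{L}$ is an isometry. The Gram operator is $\mathcal{L}^*\mathcal{L} = (\mathbf{B}_k\mathbf{B}_k^\top)\otimes(\mathbf{A}_k^\top\mathbf{A}_k)$, which equals the identity under the two orthogonality constraints. This is exactly the simplification used in the gradient computation preceding \eqref{eq:proposed-core-update}. It follows that $f$ is a convex quadratic in $\mathcal{G}_k$ with identity Hessian. Its unique stationary point, obtained by setting the gradient derived earlier to zero, is $\mathcal{G}_k^\star = \mathcal{L}^*(\widehat{\mathcal{X}}_t)$, which agrees with \eqref{eq:proposed-core-update}.

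Next I will make the minimality explicit, so that the argument does not rely on a general convexity result. Expanding the square gives the identity
\begin{equation*}
f(\mathcal{G}) = f(\mathcal{G}_k^\star) + \tfrac12\|\mathcal{G}-\mathcal{G}_k^\star\|_\mathrm{F}^2 .
\end{equation*}
To verify it, write $\widehat{\mathcal{X}}_t - \mathcal{L}(\mathcal{G}) = (\widehat{\mathcal{X}}_t - \mathcal{L}(\mathcal{G}_k^\star)) - \mathcal{L}(\mathcal{G}-\mathcal{G}_k^\star)$. The cross term is $\langle \mathcal{L}^*(\widehat{\mathcal{X}}_t) - \mathcal{L}^*\mathcal{L}\,\mathcal{G}_k^\star,\ \mathcal{G}-\mathcal{G}_k^\star\rangle$, and it vanishes because $\mathcal{L}^*\mathcal{L}=\mathrm{id}$. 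The last term satisfies $\|\mathcal{L}(\Delta)\|_\mathrm{F}=\|\Delta\|_\mathrm{F}$ by the isometry. Substituting $\mathcal{G}=\mathcal{G}_k^{(\text{old})}$ then gives the claim, together with a quantitative decrease of $\tfrac12\|\mathcal{G}_k^{(\text{old})}-\mathcal{G}_k^\star\|_\mathrm{F}^2$.

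The main obstacle is bookkeeping rather than analysis. I must make sure that $\mathcal{G}_k^{(\text{old})}$ and $\mathcal{G}_k^\star$ are compared under the same $\mathbf{A}_k[t]$ and $\mathbf{B}_k[t-1]$. The old core should be taken as the one after the $\mathbf{R}$-absorption step in Algorithm~\ref{alg:online-tt-als}, which keeps the represented tensor unchanged. I must also make sure the orthogonality hypotheses hold at that moment. Since the proposition assumes these hypotheses, I will only note that the subsequent QR step, $\mathcal{G}_k^\star=\mathbf{Q}\mathbf{R}$ with $\mathbf{R}$ pushed to the right, does not change the represented tensor. The objective value is therefore not affected by re-gauging. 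With that settled, the proof reduces to the isometry identity above.
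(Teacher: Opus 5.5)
Your proof is correct and matches the paper's argument. Both expand $f$ around $\mathcal{G}_k^\star$, kill the cross term using stationarity together with $\mathbf{A}_k[t]^\top\mathbf{A}_k[t]=\mathbf{I}$ and $\mathbf{B}_k[t-1]\mathbf{B}_k[t-1]^\top=\mathbf{I}$, and use norm preservation to get $f(\mathcal{G}_k)=f(\mathcal{G}_k^\star)+\tfrac12\|\mathcal{G}_k-\mathcal{G}_k^\star\|_\mathrm{F}^2$; you just phrase it as $\mathcal{L}^*\mathcal{L}=\mathrm{id}$ instead of explicit index sums. Your remark that $\mathcal{G}_k^{(\text{old})}$ should be the $\mathbf{R}$-absorbed core, so both are compared under the same $\mathbf{A}_k[t]$ and $\mathbf{B}_k[t-1]$, is a useful clarification the paper leaves implicit.
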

\begin{proof}
    Let $\mathcal{G}_k^\star$ be the updated core tensor, which satisfies $\frac{\partial f}{\partial \mathcal{G}_k} \bigg|_{\mathcal{G}_k = \mathcal{G}_k^\star} = 0$.
    For an arbitrary core tensor $\mathcal{G}_k$, we can express it as $\mathcal{G}_k = \mathcal{G}_k^\star + \Delta\mathcal{G}$.
    We define the residual as $\mathcal{R} = \widehat{\mathcal{X}}_t - \mathbf{A}_k[t] \ttimes{k}{1} \mathcal{G}_k^\star \ttimes{k+1}{1} \mathbf{B}_k[t-1]$, and the propagated difference as $\mathcal{D} = \mathbf{A}_k[t] \ttimes{k}{1} \Delta\mathcal{G} \ttimes{k+1}{1} \mathbf{B}_k[t-1]$.
    Substituting these into \eqref{eq:proposed-objective-matrix}, the objective function is $f(\mathcal{G}_k) = \frac{1}{2} \|\mathcal{R} - \mathcal{D}\|_\mathrm{F}^2$.
    Expanding this expression yields:
    \begin{align*}
        f(\mathcal{G}_k) = f(\mathcal{G}_k^\star) - \sum_{l,m,n} \mathcal{R}(l,m,n)\mathcal{D}(l,m,n) + \frac{1}{2} \|\mathcal{D}\|_\mathrm{F}^2.
    \end{align*}
    Expanding mode products, the second term becomes:
    \begin{align*}
        \sum_{l,m,n} \mathcal{R}(l,m,n)\mathcal{D}(l,m,n) &= \sum_{l,m,n} \mathcal{R}(l,m,n) \left( \sum_{\alpha, \beta} \mathbf{A}_k[t](l,\alpha) \Delta\mathcal{G}(\alpha,m,\beta) \mathbf{B}_k[t-1](\beta,n) \right) \\
        &= \sum_{\alpha, m, \beta} \Delta\mathcal{G}(\alpha,m,\beta) \left( \sum_{l,n} (\mathbf{A}_k[t])^\top(\alpha,l) \mathcal{R}(l,m,n) (\mathbf{B}_k[t-1])^\top(n,\beta) \right)
    \end{align*}
    The inner summation equals $\mathbf{A}_k[t]^\top \times \mathcal{R} \times \mathbf{B}_k[t-1]^\top$.
    Applying the orthogonality, we have
    \begin{align*}
        \mathbf{A}_k[t]^\top \times \mathcal{R} \times \mathbf{B}_k[t-1]^\top &= \mathbf{A}_k[t]^\top \times \left( \widehat{\mathcal{X}}_t - \mathbf{A}_k[t] \times \mathcal{G}_k^\star \times \mathbf{B}_k[t-1] \right) \times \mathbf{B}_k[t-1]^\top \\
        &= \mathbf{A}_k[t]^\top \times \widehat{\mathcal{X}}_t \times \mathbf{B}_k[t-1]^\top - \mathcal{G}_k^\star \\
        &= -\frac{\partial f}{\partial \mathcal{G}_k} \bigg|_{\mathcal{G}_k = \mathcal{G}_k^\star} = 0.
    \end{align*}
    Thus, the second term reduces to zero.
    The orthogonality also indicates that $\|\mathcal{D}\|_\mathrm{F}^2 = \|\Delta\mathcal{G}\|_\mathrm{F}^2$.
    Therefore, we obtain $f(\mathcal{G}_k) = f(\mathcal{G}_k^\star) + \frac{1}{2} \|\Delta\mathcal{G}\|_\mathrm{F}^2 \geq f(\mathcal{G}_k^\star)$, which completes the proof.
\end{proof}

\begin{proposition}[Temporal Smoothness Bound]
    Let $\mathcal{M}_{t-1}$ be the reconstructed tensor slice at time $t-1$ with an approximation error $E_{t-1} = \|\widehat{\mathcal{X}}_{t-1} - \mathcal{M}_{t-1}\|_\mathrm{F}$.
    If the variation in the streaming data is bounded by $\|\widehat{\mathcal{X}}_t - \widehat{\mathcal{X}}_{t-1}\|_\mathrm{F} \leq \epsilon$, then the final error $E_t$ evaluated on the new data is bounded by $E_t \leq E_{t-1} + \epsilon$.     
\end{proposition}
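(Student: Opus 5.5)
The plan is to compare the final time-$t$ reconstruction $\mathcal{M}_t$ with the previous reconstruction $\mathcal{M}_{t-1}$, regarded as a feasible (not necessarily optimal) point for the time-$t$ local problems. Then chain Proposition~\ref{prop:monotonic_convergence} over the sweep and finish with a triangle inequality.

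First I would check that the gauge operations at the start of step $t$ do not change the represented tensor. The right-orthogonalization normalizes $\bm{g}_n[t-1]$, performs the LQ factorizations, and absorbs each factor $\mathbf{L}$ into the left neighbour. Each of these is an exact refactorization. After it, the network built from the cores $\{\mathcal{G}_k[t-1]\}$ and $\bm{g}_n[t-1]$ still represents $\mathcal{M}_{t-1}$, with the cores $2,\dots,n$ right-orthogonal.

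The same holds during the forward sweep. At each $k$, the QR step $\tilde{\mathcal{G}}_k=\mathbf{Q}\mathbf{R}$ followed by absorbing $\mathbf{R}$ into $\mathcal{G}_{k+1}$ leaves the full contraction unchanged. Hence the value of the local objective at $\mathcal{G}_{k+1}^{(\text{old})}$ (the core after absorbing $\mathbf{R}$) equals $f(\mathcal{G}_k^\star)$ from the previous local step.

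Write $\Phi_k$ for the value of $\tfrac12\|\widehat{\mathcal{X}}_t-\text{(current network)}\|_\mathrm{F}^2$. Proposition~\ref{prop:monotonic_convergence} applies at each $k$ because the orthogonality constraints hold there. Combined with the invariance above, this gives
\begin{equation*}
\tfrac12 E_t^2=\Phi_n\le\Phi_{n-1}\le\cdots\le\Phi_1\le\Phi_0=\tfrac12\|\widehat{\mathcal{X}}_t-\mathcal{M}_{t-1}\|_\mathrm{F}^2 .
\end{equation*}
The last step, updating $\bm{g}_n$ via $\mathbf{A}_n^\top\mathrm{vec}(\overline{\mathcal{X}}_t)$, is again an exact least-squares solve with orthonormal $\mathbf{A}_n$, so it fits the chain. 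Taking square roots yields $E_t\le\|\widehat{\mathcal{X}}_t-\mathcal{M}_{t-1}\|_\mathrm{F}$.

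Finally, the triangle inequality gives
\begin{equation*}
\|\widehat{\mathcal{X}}_t-\mathcal{M}_{t-1}\|_\mathrm{F}\le\|\widehat{\mathcal{X}}_t-\widehat{\mathcal{X}}_{t-1}\|_\mathrm{F}+\|\widehat{\mathcal{X}}_{t-1}-\mathcal{M}_{t-1}\|_\mathrm{F}\le\epsilon+E_{t-1},
\end{equation*}
and combining the two bounds gives $E_t\le E_{t-1}+\epsilon$.

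The main obstacle is making precise that the starting point $\Phi_0$ is really $\mathcal{M}_{t-1}$. In the time-$t$ objective, the "old" last-core column is $\bm{g}_n[t-1]$. So $\mathcal{M}_{t-1}$ must be interpreted as the slice reconstructed with the column $\bm{g}_n[t-1]$, i.e.\ the time-$(t-1)$ output $\mathcal{G}_1[t-1]\times\cdots\times\mathcal{G}_{n-1}[t-1]\times\bm{g}_n[t-1]$. The invariance claims also have to track the normalization step: the norm of $\bm{g}_n[t-1]$ is absorbed into $\mathcal{G}_{n-1}$, so the product is unchanged. I would state this identification explicitly and verify, step by step, that every QR/LQ absorption is an identity on the contracted tensor.
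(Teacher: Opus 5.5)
Your argument follows the paper's own route: gauge invariance of the right-orthogonalization, monotone decrease from Proposition~\ref{prop:monotonic_convergence}, then the triangle inequality. Your bookkeeping of the $\mathbf{R}$-absorptions, the final $\bm{g}_n$ solve, and the meaning of $\mathcal{M}_{t-1}$ is more careful than the paper's one-line appeal.

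\textbf{The gap.} The step that fails is ``Proposition~\ref{prop:monotonic_convergence} applies at each $k$ because the orthogonality constraints hold there,'' together with your claim that cores $2,\dots,n$ are right-orthogonal after the first phase. The interface $\mathbf{B}_k[t-1]$ is built from the \emph{single column} $\bm{g}_n[t-1]$. Normalizing it gives $\bm{g}_n^\top\bm{g}_n=1$, not $\bm{g}_n\bm{g}_n^\top=\mathbf{I}_{r_{n-1}}$. Consequently $\mathbf{B}_{n-1}\mathbf{B}_{n-1}^\top=\bm{g}_n\bm{g}_n^\top$ has rank one. Likewise $\mathbf{B}_{n-2}\mathbf{B}_{n-2}^\top=\sum_i\mathbf{G}^{(i)}\bm{g}_n\bm{g}_n^\top\mathbf{G}^{(i)\top}$ with $\mathbf{G}^{(i)}=\mathcal{G}_{n-1}(:,i,:)$, whereas right-orthogonality only gives $\sum_i\mathbf{G}^{(i)}\mathbf{G}^{(i)\top}=\mathbf{I}$. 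So for $k\le n-2$ and $r_{n-1}\ge 2$, the update $\mathbf{A}_k^\top\widehat{\mathcal{X}}_t\mathbf{B}_k^\top$ is in general not a minimizer of $f$ and can increase it. At $k=n-1$ it is still a minimizer, because $\bm{g}_n$ is a unit vector.

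Here is a concrete instance. Take $n=3$, $I_1=I_2=2$, $r=(1,2)$, time-$(t-1)$ output $\mathcal{G}_1=\bm{e}_1$, $\mathcal{G}_2(1,:,:)=\mathbf{I}_2$, $\bm{g}_3=\bm{e}_1$, and $\overline{\mathcal{X}}_{t-1}=\overline{\mathcal{X}}_t=\bm{e}_1\bm{e}_1^\top$. Then $E_{t-1}=0$, and right-orthogonalization gives $\mathbf{B}_1=(1,0)/\sqrt{2}$. The $k=1$ solve moves the reconstruction to $\bm{e}_1\bm{e}_1^\top/2$, so $\Phi_1=1/8>0=\Phi_0$. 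Your chain $\Phi_n\le\cdots\le\Phi_0$ is therefore false as written. The paper's proof leans on exactly the same step, so this hole is inherited rather than introduced by you.

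\textbf{A repair.} The endpoint bound is still true, but it needs a different argument: compare the algorithm with a shadow exact-ALS sweep that uses the algorithm's own left interfaces $\mathbf{A}_k$.
Because $\mathbf{A}_k^\top\mathbf{A}_k=\mathbf{I}$, an exact local minimizer is $\mathbf{A}_k^\top\widehat{\mathcal{X}}_t\mathbf{B}_k^{+}$ (Moore--Penrose pseudoinverse). In the $r_{k-1}I_k\times r_k$ unfolding, its column space equals that of $\tilde{\mathcal{G}}_k=\mathbf{A}_k^\top\widehat{\mathcal{X}}_t\mathbf{B}_k^\top$. Hence it lies in the range of $\mathbf{Q}_k$ and factors as $\mathbf{Q}_k\mathbf{R}_k'$.
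Absorbing $\mathbf{R}_k'$ into $\mathcal{G}_{k+1}$ preserves the shadow contraction, which is your invariance argument. So the shadow objective decreases monotonically from $\tfrac12\|\widehat{\mathcal{X}}_t-\mathcal{M}_{t-1}\|_\mathrm{F}^2$, while the next interface $\mathbf{A}_{k+1}=\mathbf{A}_k\times\mathbf{Q}_k$ coincides with the algorithm's.
The algorithm's own absorbed $\mathbf{R}_k$ never affects its output. The next local solve uses only $\mathbf{A}_{k+1}$ and the precomputed $\mathbf{B}_{k+1}$, and $\bm{g}_n$ is overwritten at the end.
Hence $\tfrac12E_t^2=\min_{\bm{h}}\tfrac12\|\mathrm{vec}(\overline{\mathcal{X}}_t)-\mathbf{A}_n\bm{h}\|_2^2$ is at most the last shadow value. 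This gives $E_t\le\|\widehat{\mathcal{X}}_t-\mathcal{M}_{t-1}\|_\mathrm{F}$, and your triangle-inequality step finishes the proof. This repaired argument uses no orthogonality of $\mathbf{B}_k$ at all.
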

\begin{proof}
    Right-orthogonalization alters the gauge but preserves $\mathcal{M}_{t-1}$.
    Thus, the initial error evaluated on the new data $\widehat{\mathcal{X}}_t$ is $\|\widehat{\mathcal{X}}_t - \mathcal{M}_{t-1}\|_\mathrm{F}$.
    By the triangle inequality, we have $\|\widehat{\mathcal{X}}_t - \mathcal{M}_{t-1}\|_\mathrm{F} \leq \|\widehat{\mathcal{X}}_t - \widehat{\mathcal{X}}_{t-1}\|_\mathrm{F} + \|\widehat{\mathcal{X}}_{t-1} - \mathcal{M}_{t-1}\|_\mathrm{F} \leq \epsilon + E_{t-1}$.
    Since Proposition~\ref{prop:monotonic_convergence} guarantees a monotonic decrease, $E_t \leq \|\widehat{\mathcal{X}}_t - \mathcal{M}_{t-1}\|_\mathrm{F} \leq E_{t-1} + \epsilon$.
\end{proof}

\subsection{Computational Analysis}
\label{subsec:computational_complexity}
While the strict enforcement of orthogonal constraints guarantees theoretical stability, it might initially appear that calculating these exact algebraic updates and tracking the gauges via QR/LQ decompositions at every time step would introduce a computational bottleneck.
However, the rigorously maintained orthogonal subspace decouples the optimization, allowing us to substantially reduce the dimensionality of the intermediate tensor contractions.

In the following, we provide a detailed analysis of the computational complexity of the proposed method per time step for an $n$-th order streaming tensor.
We assume that the size of each mode is $I$ and the TT-ranks are uniform ($r = r_1 = \cdots = r_{n-1}$), with $I \gg r \gg n$.
Tensor matricization and reshaping are excluded from the analysis as they are $\mathcal{O}(1)$ operations via index manipulation.

\paragraph{Analysis of Right Orthogonalization}
In this step, we process $n-1$ TT cores sequentially from right to left. The processing of each TT core consists of two stages:
\begin{itemize}
    \item \textbf{LQ Decomposition:} An input matrix of size $r \times Ir$ is decomposed using LQ decomposition. The computational cost is $\mathcal{O}(I r^3)$.
    \item \textbf{Absorption of $\mathbf{L}$:} The resulting lower triangular matrix $\mathbf{L}$ (size $r \times r$) is multiplied into the left adjacent core (size $rI \times r$). The computational cost is $\mathcal{O}(I r^3)$.
\end{itemize}
Since these operations are performed for $n-1$ TT cores, the total complexity of the right orthogonalization phase is $\mathcal{O}(n I r^3)$.

\paragraph{Analysis of Contraction Computations}
We consider the computation of $\mathcal{A}_k[t]$ and $\mathcal{B}_k[t-1]$ required for updating the TT cores.
\begin{itemize}
    \item \textbf{Computation of $\mathcal{A}_k[t]$:} This involves the matrix multiplication of a matrix of size $I^{k-2} \times r$ and a matrix of size $r \times Ir$. The cost is $\mathcal{O}(I^{k-1} r^2)$. Since the sum over $k=1, \ldots, n-1$ is dominated by the term at $k=n-1$, the complexity is $\mathcal{O}(I^{n-2} r^2)$.
    \item \textbf{Computation of $\mathcal{B}_k[t-1]$:} This involves the matrix multiplication of a matrix of size $rI \times r$ and a matrix of size $r \times (I^{n-k-2})$. The cost is $\mathcal{O}(I^{n-k-1} r^2)$. Since the sum over $k=1, \ldots, n-1$ is dominated by the term at $k=1$, the complexity is $\mathcal{O}(I^{n-2} r^2)$.
\end{itemize}
Combining these, the total complexity for computing the left and right contractions is $\mathcal{O}(I^{n-2} r^2)$.

\paragraph{Analysis of Core Update}
The update of each TT core based on \eqref{eq:proposed-core-update} involves contraction calculations of three components: a matrix of size $r \times I^{k-1}$, a tensor of size $I^{k-1} \times I \times I^{n-k-1}$, and a matrix of size $(I^{n-k-1} \times r)$.
\begin{itemize}
    \item \textbf{Case $k=1$:} Since $\mathcal{A}_1[t] = 1$, the complexity is $\mathcal{O}(I^{n-1} r)$.
    \item \textbf{Case $2 \leq k \leq n-2$:} The complexity is determined by the sum of contraction costs, resulting in $\mathcal{O}(I^{n-1} r + I^{n-k} r^2)$.
    \item \textbf{Case $k=n-1$:} Since $\mathcal{B}_n[t-1] = 1$, the complexity is $\mathcal{O}(I^{n-1} r)$.
\end{itemize}
Therefore, the total computational cost for updating all TT cores is $\mathcal{O}(I^{n-1} r + I^{n-2} r^2)$.

\paragraph{Analysis of Left Orthogonalization}
For each updated TT core, the following two stages are performed:
\begin{itemize}
    \item \textbf{QR Decomposition:} A matrix of size $I r \times r$ is decomposed using QR decomposition. The computational cost is $\mathcal{O}(I r^3)$.
    \item \textbf{Absorption of $\mathbf{R}$:} The resulting upper triangular matrix $\mathbf{R}$ (size $r \times r$) is multiplied into the right adjacent core (size $r \times Ir$). The computational cost is $\mathcal{O}(I r^3)$.
\end{itemize}
Since these operations are performed for $n-1$ TT cores, the total complexity of the left orthogonalization phase is $\mathcal{O}(n I r^3)$.

\paragraph{Total Complexity and Discussion}
Based on the above analysis, the total computational complexity of the proposed method at time $t$ is $\mathcal{O}(I^{n-1} r + I^{n-2} r^2 + n I r^3)$. Under the practical assumption $I \gg r \gg n$, the dominant term becomes $\mathcal{O}(I^{n-1} r)$.

This indicates that the computational cost of our exact single-sweep update scales linearly with the raw data size of the incoming tensor slice ($\prod_{i=1}^{n-1} I = I^{n-1}$).
Furthermore, this $\mathcal{O}(I^{n-1} r)$ complexity represents a significant theoretical advantage over the preceding TT-FOA method, which requires $\mathcal{O}(I^{n-1} r^2)$.
By scaling linearly rather than quadratically with respect to the TT-rank $r$, our deterministic approach achieves millisecond-level processing times without compromising tracking stability or relying on randomized approximations.

\section{Experiments}
\label{sec:experiments}
In this section, we report the results of comprehensive experiments to demonstrate the effectiveness of the proposed Online TT-ALS method.
The primary objectives of these evaluations are fourfold: (i) to verify the scalability of our method on high-order streaming tensors, (ii) to evaluate its numerical convergence and perceptual quality on real-world videos with both static and dynamic backgrounds, (iii) to compare its computational latency against modern deep learning-based paradigms, and (iv) to validate the necessity of the proposed orthogonality for long-term tracking stability.

\subsection{Experimental Setup and Evaluation Metrics}
We evaluated our method using both synthetic high-order tensors and real-world video datasets:
\begin{itemize}
    \item \textbf{Synthetic High-Order Tensors:} To test scalability, we generated streaming tensors of order $n=5, 6, 7$ with a spatial dimension of $I = 30$ per mode and $T = 100$ time steps. To mimic real-world data properties, these tensors were constructed by contracting randomly generated TT cores whose matricized forms exhibited decaying singular values.
    \item \textbf{Static Background Videos:} We selected the \textit{office} sequence as a grayscale video (third-order, $240 \times 360 \times 500$) and the \textit{pedestrians} sequence as a color video (fourth-order, $240 \times 360 \times 3 \times 500$) from the ``baseline'' category of the CDnet 2014 benchmark \citep{CDNet2014}\footnote{\url{http://www.changedetection.net/}}. Both videos feature a static background with moving people.
    \item \textbf{Dynamic Background Videos:} To test tracking robustness against camera motion, we utilized the \textit{continuousPan} sequence from the ``PTZ'' category of the CDnet 2014 benchmark.
\end{itemize}

For comparative evaluation, we assessed the proposed method against several algebraic batch and online methods, as well as a recent neural network-based approach. The evaluated algorithms are as follows:
\begin{itemize}
    \item \textbf{TT-ALS} \citep{TT-ALS}: A standard batch-based TT decomposition algorithm.
    \item \textbf{TT-ALS (Slice)} \citep{TT-ALS}: A method in which TT-ALS is applied sequentially to the data slice at each time step.
    \item \textbf{TT-ALS (Batch)} \citep{TT-ALS}: A variant in which TT-ALS processes frames in small sliding windows (batch size $= 10$).
    \item \textbf{TT-FOA} \citep{TT-FOA}: An existing online TT decomposition algorithm based on first-order approximation.
    \item \textbf{OFTD} \citep{OFTD}: A recent deep learning-based continuous functional approximation framework.
    \item \textbf{Online TT-ALS (Ours)}: The exact, single-sweep online TT decomposition algorithm proposed in this paper.
\end{itemize}
All algorithms were implemented in Julia (version 1.11.5) and evaluated on a workstation equipped with an AMD EPYC 7702P 64-Core Processor and 512 GB of RAM, running Ubuntu 20.04.6 LTS.
For a fair comparison, all computations were executed using a single thread, except for OFTD, which was evaluated using its official Python implementation.

In our experiments, we reconstructed the original tensors from the obtained Tensor Train decompositions and evaluated the reconstruction error.
However, particularly for the video datasets, relying solely on relative error may not adequately reflect the perceptual quality.

\begin{figure}[t]
    \centering
    \begin{subfigure}{0.32\linewidth}
        \includegraphics[width=\linewidth]{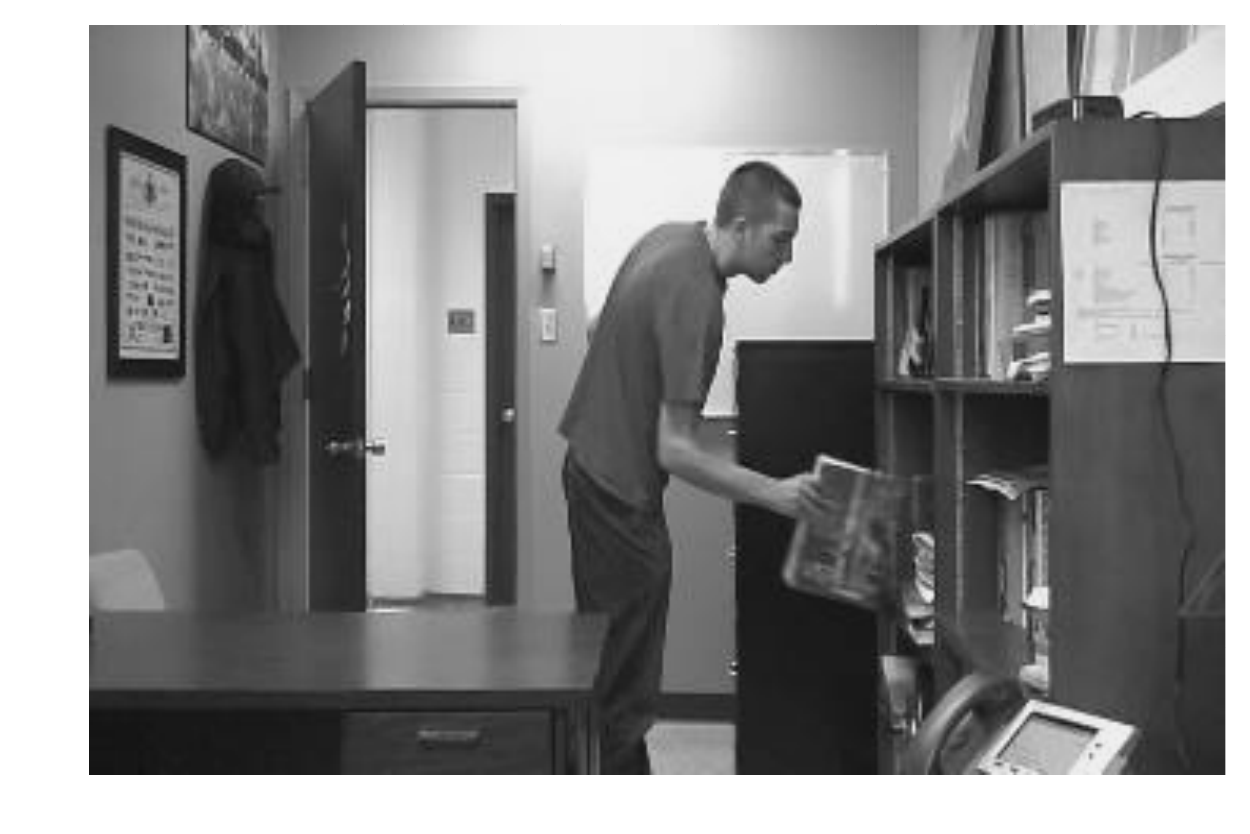}
        \caption{Original Frame}
    \end{subfigure}
    \begin{subfigure}{0.32\linewidth}
        \includegraphics[width=\linewidth]{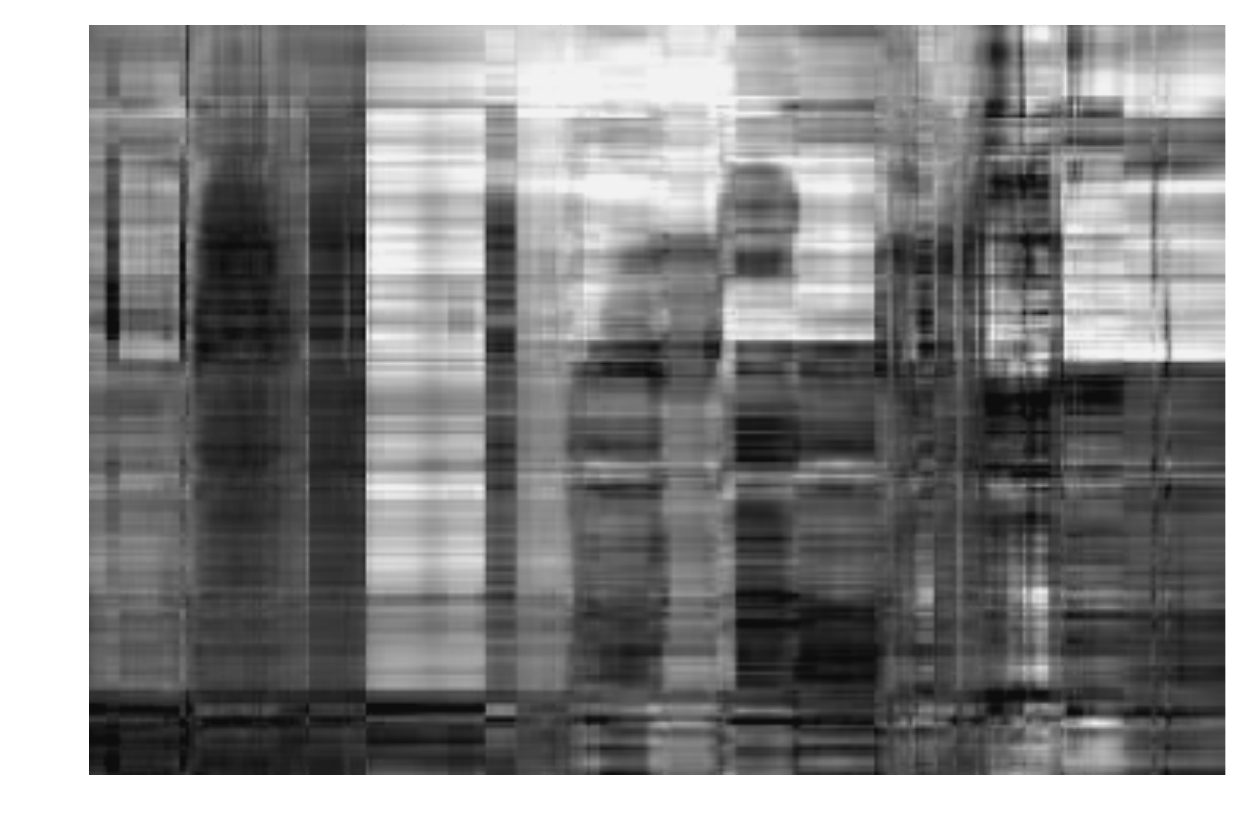}
        \caption{TT-ALS (Slice)}
    \end{subfigure}
    \begin{subfigure}{0.32\linewidth}
        \includegraphics[width=\linewidth]{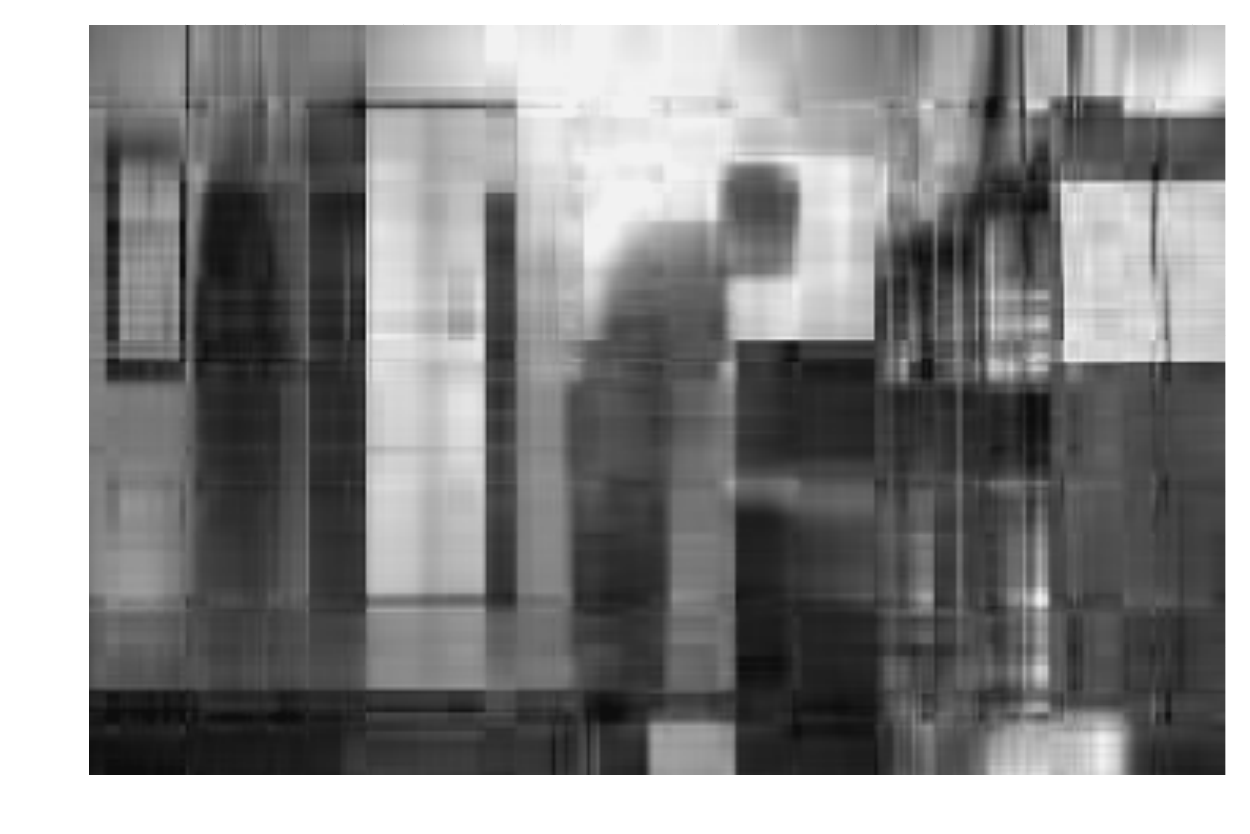}
        \caption{TT-FOA}
    \end{subfigure}
    \caption{Comparison of reconstructed images at frame 350 of the grayscale video ($r=(10,10)$). 
    Although the RE values are comparable ($\approx 0.20$), TT-FOA yields a clearer reconstruction of the person with less noise.
    }
    \label{fig:visual_comparison_focus}
\end{figure}
For instance, as shown later in Table \ref{tab:comparison_gray_10}, the Relative Error (RE) for TT-ALS (Slice) is 0.205, which is comparable to 0.206 for TT-FOA.
However, a visual comparison of the reconstructed images in Figure~\ref{fig:visual_comparison_focus} reveals that TT-FOA produces a clearer image with less noise.
This demonstrates that a small numerical error does not necessarily guarantee high perceptual quality.
To address this issue, we employed the following seven metrics for evaluation, encompassing both mathematical error and perceptual quality:
\begin{itemize}
    \item \textbf{RE} (Relative Error): A metric measuring the numerical approximation error of the entire tensor defined by the Frobenius norm.
    \item \textbf{M-RMSE} (Masked Root Mean Square Error): RMSE calculated specifically for the moving object (foreground) regions extracted via background subtraction.
    \item \textbf{PSNR} (Peak Signal-to-Noise Ratio) \citep{Digital-Image-Processing}: A standard quality metric in video signal processing based on the pixel-wise mean squared error.
    \item \textbf{SSIM} (Structural Similarity) \citep{SSIM}: A metric based on human visual characteristics, integrating luminance, contrast, and structural information.
    \item \textbf{MS-SSIM} (Multi-Scale SSIM) \citep{MS-SSIM}: An extension of SSIM evaluated across multiple resolution scales to enhance stability.
    \item \textbf{LPIPS} (Learned Perceptual Image Patch Similarity) \citep{LPIPS}: A perceptual similarity metric based on the distance between feature maps in deep neural networks.
    \item \textbf{VMAF} (Video Multi-method Assessment Fusion) \citep{VMAF}: An industry-standard video quality metric trained to predict human subjective quality ratings.
\end{itemize}
We categorized these seven metrics into three groups for our analysis.
The first category comprises Mathematical Metrics, which are fundamental metrics based on numerical pixel-wise errors.
This category includes RE for overall approximation accuracy, M-RMSE for focused evaluation of moving object regions, and PSNR for the signal-to-noise ratio.
The second category is Structural Metrics.
Even if the numerical error is low, structural details such as edges may be lost, resulting in blur artifacts.
Therefore, considering human visual characteristics, we adopted SSIM and MS-SSIM to assess the preservation of luminance, contrast, and structural information.
The final category is Perceptual and Video Quality Metrics.
To more rigorously evaluate the results in terms of human visual perception, beyond purely numerical reconstruction accuracy, we employed LPIPS, which leverages deep learning feature representations, as well as VMAF, an industry-standard metric for video quality assessment.
These metrics quantify the perceived visual quality of the reconstructed video.

In addition to quality metrics, we measured the average execution time required per frame for each algorithm.
Note that for the batch-based TT-ALS, the total time required to process the entire sequence was recorded as the average execution time.

\subsection{Scalability on High-Order Tensors}
\begin{table}[!t]
    \centering
    \caption{Scalability on higher-order streaming tensors. Results are shown as (Relative Error / Average Time per Frame in ms). ``OOM'' indicates Out-Of-Memory.}
    \label{tab:high_order_scalability}
        \begin{tabular}{l c c c}
            \toprule
            \textbf{Method} & $\mathbf{n=5}$ & $\mathbf{n=6}$ & $\mathbf{n=7}$ \\
            \midrule
            TT-ALS      & 0.306 / 56.91 & 0.240 / 1442.0 & N/A (OOM) \\
            TT-ALS (Slice)    & 0.253 / 32.95 & 0.265 / 1338.0 & 0.269 / 47253.0 \\
            TT-ALS (Batch) & 0.276 / 56.14 & 0.260 / 1769.0 & N/A (OOM) \\
            TT-FOA            & \textbf{0.141} / 81.43 & \textbf{0.165} / 2573.0 & 0.609 / 78043.0 \\
            Online TT-ALS (Ours) & 0.166 / \textbf{15.21} & 0.178 / \textbf{545.5} & \textbf{0.177} / \textbf{17365.0} \\
            \bottomrule
        \end{tabular}
\end{table}

The Tensor Train format is fundamentally designed to avoid the curse of dimensionality, a theoretical bottleneck where the number of parameters scales exponentially with the tensor order in traditional frameworks such as Tucker decomposition.
While the TT format theoretically offers linear scaling, maintaining this efficiency and computational stability in an online streaming setting remains a significant challenge.

To demonstrate that our proposed Online TT-ALS achieves this high-dimensional scalability in practice, we evaluated the algorithms on synthetic higher-order streaming tensors ($n=5, 6, 7$).
This evaluation explicitly shows how our method overcomes the memory exhaustion and computational delays inherent in conventional batch and existing online TT methods.
As shown in Table \ref{tab:high_order_scalability}, both the full-batch and mini-batch TT-ALS failed due to memory exhaustion (Out-Of-Memory, OOM) at $n=7$.
TT-FOA successfully processed the data but required prohibitively high computational cost (over $78$ seconds per frame) with significant error degradation.
In contrast, our proposed method maintained stable approximation accuracy and scaled efficiently.
This result numerically verifies the theoretical $\mathcal{O}(I^{n-1} r)$ linear complexity of the proposed algorithm, proving its practical applicability for high-dimensional streaming environments.

\subsection{Performance on Streaming Video Data}
\subsubsection{Accuracy and Perceptual Quality}
We conducted extensive tracking and compression experiments on real-world video sequences featuring both static and dynamic backgrounds. In the quantitative evaluation tables presented below, the best results are highlighted in \textbf{bold}, and the second-best results are \underline{underlined}. The arrows indicate whether lower ($\downarrow$) or higher ($\uparrow$) values denote better performance.

\paragraph{Quantitative Evaluation}
For the static background scenarios, as summarized in Tables \ref{tab:comparison_gray_10} through \ref{tab:comparison_color_30}, the proposed Online TT-ALS demonstrated superior performance.
Overall, it outperformed other methods not only in mathematical metrics such as RE and PSNR but also in structural and perceptual metrics such as SSIM and VMAF.
A detailed analysis reveals that while the batch-based methods (TT-ALS and TT-ALS (Slice)) achieved mathematical metrics comparable to the online methods (TT-FOA and the proposed method), they yielded inferior results in terms of structural and perceptual metrics.
This disparity is particularly pronounced in the VMAF scores, where a significant gap exists between batch and online approaches. 

Furthermore, the proposed method significantly reduced mathematical errors even under strict low-rank constraints (e.g., $r=(10,10)$ and $r=(10,3,3)$).
Consequently, metrics based on human visual perception, such as SSIM and VMAF, were substantially improved.
Even with higher ranks, our method consistently maintained the best performance, with particularly notable gains in VMAF.
Regarding computational efficiency, although TT-ALS (Slice) exhibited shorter average execution times in certain settings, the proposed method was consistently faster than TT-FOA, demonstrating its capability for practical online processing.

This quantitative advantage extends to scenarios with dynamic camera motion. As shown in Table \ref{tab:comparison_dynamic_background} for the \textit{continuousPan} sequence under a high-rank setting ($r=(30,3,3)$), our method consistently outperformed all baselines across every evaluation metric, proving its robustness against complex background variations.

\begin{table}[t]
    \centering
    \begin{minipage}[t]{0.48\linewidth}
        \centering
        \caption{Quantitative comparison of algorithms on the grayscale video ($r=(10,10)$).}
        \label{tab:comparison_gray_10}
        \resizebox{\linewidth}{!}{
        \setlength{\tabcolsep}{2pt}
        \begin{tabular}{l cc c cc c cc c}
        \toprule
        & \multicolumn{3}{c}{\textbf{Mathematical}} & \multicolumn{2}{c}{\textbf{Structural}} & \multicolumn{2}{c}{\textbf{Perceptual}} & \textbf{Time} \\
        \cmidrule(lr){2-4} \cmidrule(lr){5-6} \cmidrule(lr){7-8} \cmidrule(lr){9-9}
        \textbf{Method} & RE $\downarrow$ & M-RMSE $\downarrow$ & PSNR $\uparrow$ & SSIM $\uparrow$ & MS-SSIM $\uparrow$ & LPIPS $\downarrow$ & VMAF $\uparrow$ & \makecell{Mean Time \\ (ms) $\downarrow$} \\
        \midrule
        TT-ALS                & \underline{0.193} & 0.184 & 20.50 & 0.549 & 0.749 & 0.541 & 19.68 & 2138.088 \\
        TT-ALS (Slice)        & 0.205 & 0.161 & 20.04 & 0.534 & 0.755 & 0.523 & 21.20 & \textbf{2.244} \\
        TT-FOA                & 0.206 & \underline{0.141} & \underline{23.30} & \underline{0.740} & \underline{0.875} & \underline{0.369} & \underline{39.85} & 5.370 \\
        Online TT-ALS (Ours) & \textbf{0.128} & \textbf{0.121} & \textbf{24.02} & \textbf{0.766} & \textbf{0.902} & \textbf{0.346} & \textbf{42.35} & \underline{2.756} \\
        \bottomrule
        \end{tabular}
        }
    \end{minipage}
    \hfill
    \begin{minipage}[t]{0.48\linewidth}
        \centering
        \caption{Quantitative comparison of algorithms on the grayscale video ($r=(30,30)$).}
        \label{tab:comparison_gray_30}
        \resizebox{\linewidth}{!}{
        \setlength{\tabcolsep}{2pt}
        \begin{tabular}{l cc c cc c cc c}
        \toprule
        & \multicolumn{3}{c}{\textbf{Mathematical}} & \multicolumn{2}{c}{\textbf{Structural}} & \multicolumn{2}{c}{\textbf{Perceptual}} & \textbf{Time} \\
        \cmidrule(lr){2-4} \cmidrule(lr){5-6} \cmidrule(lr){7-8} \cmidrule(lr){9-9}
        \textbf{Method} & RE $\downarrow$ & M-RMSE $\downarrow$ & PSNR $\uparrow$ & SSIM $\uparrow$ & MS-SSIM $\uparrow$ & LPIPS $\downarrow$ & VMAF $\uparrow$ & \makecell{Mean Time \\ (ms) $\downarrow$} \\
        \midrule
        TT-ALS                & 0.124 & 0.120 & 24.37 & 0.669 & 0.880 & 0.397 & 47.18 & 2307.555 \\
        TT-ALS (Slice)        & \underline{0.108} & \underline{0.090} & 25.53 & 0.704 & 0.907 & 0.357 & 53.73 & \textbf{3.963} \\
        TT-FOA                & 0.171 & 0.102 & \underline{28.27} & \underline{0.841} & \underline{0.944} & \underline{0.211} & \underline{76.76} & 268.663 \\
        Online TT-ALS (Ours) & \textbf{0.068} & \textbf{0.075} & \textbf{29.54} & \textbf{0.874} & \textbf{0.973} & \textbf{0.173} & \textbf{82.92} & \underline{24.485} \\
        \bottomrule
        \end{tabular}
        }
    \end{minipage}
\end{table}

\begin{table*}[t]
    \centering
    \begin{minipage}[t]{0.48\linewidth}
        \centering
        \caption{Quantitative comparison of algorithms on the color video ($r=(10,3,3)$).}
        \label{tab:comparison_color_10}
        \resizebox{\linewidth}{!}{
        \setlength{\tabcolsep}{2pt}
        \begin{tabular}{l cc c cc c cc c}
        \toprule
        & \multicolumn{3}{c}{\textbf{Mathematical}} & \multicolumn{2}{c}{\textbf{Structural}} & \multicolumn{2}{c}{\textbf{Perceptual}} & \textbf{Time} \\
        \cmidrule(lr){2-4} \cmidrule(lr){5-6} \cmidrule(lr){7-8} \cmidrule(lr){9-9}
        \textbf{Method} & RE $\downarrow$ & M-RMSE $\downarrow$ & PSNR $\uparrow$ & SSIM $\uparrow$ & MS-SSIM $\uparrow$ & LPIPS $\downarrow$ & VMAF $\uparrow$ & \makecell{Mean Time \\ (ms) $\downarrow$} \\
        \midrule
        TT-ALS                & 0.161 & 0.341 & 17.92 & 0.322 & 0.436 & 0.575 & 3.79 & 6124.260 \\
        TT-ALS (Slice)        & \underline{0.113} & \underline{0.176} & 20.90 & 0.554 & 0.710 & 0.443 & 6.28 & 11.910 \\
        TT-FOA                & 0.169 & 0.198 & \underline{23.36} & \underline{0.696} & \underline{0.828} & \underline{0.389} & \underline{11.47} & \underline{11.328} \\
        Online TT-ALS (Ours) & \textbf{0.073} & \textbf{0.136} & \textbf{24.57} & \textbf{0.719} & \textbf{0.859} & \textbf{0.356} & \textbf{19.86} & \textbf{3.904} \\
        \bottomrule
        \end{tabular}
        }
    \end{minipage}
    \hfill
    \begin{minipage}[t]{0.48\linewidth}
        \centering
        \caption{Quantitative comparison of algorithms on the color video ($r=(30,3,3)$).}
        \label{tab:comparison_color_30}
        \resizebox{\linewidth}{!}{
        \setlength{\tabcolsep}{2pt}
        \begin{tabular}{l cc c cc c cc c}
        \toprule
        & \multicolumn{3}{c}{\textbf{Mathematical}} & \multicolumn{2}{c}{\textbf{Structural}} & \multicolumn{2}{c}{\textbf{Perceptual}} & \textbf{Time} \\
        \cmidrule(lr){2-4} \cmidrule(lr){5-6} \cmidrule(lr){7-8} \cmidrule(lr){9-9}
        \textbf{Method} & RE $\downarrow$ & M-RMSE $\downarrow$ & PSNR $\uparrow$ & SSIM $\uparrow$ & MS-SSIM $\uparrow$ & LPIPS $\downarrow$ & VMAF $\uparrow$ & \makecell{Mean Time \\ (ms) $\downarrow$} \\
        \midrule
        TT-ALS                & 0.095 & 0.337 & 22.58 & 0.647 & 0.779 & 0.336 & 11.96 & 6379.705 \\
        TT-ALS (Slice)        & \underline{0.066} & \underline{0.093} & 25.63 & 0.708 & 0.871 & 0.239 & 36.92 & 18.037 \\
        TT-FOA                & 0.166 & 0.142 & \underline{27.05} & \underline{0.814} & \underline{0.915} & \underline{0.211} & \underline{54.53} & \underline{14.782} \\
        Online TT-ALS (Ours) & \textbf{0.040} & \textbf{0.068} & \textbf{29.90} & \textbf{0.850} & \textbf{0.953} & \textbf{0.144} & \textbf{72.01} & \textbf{5.911} \\
        \bottomrule
        \end{tabular}
        }
    \end{minipage}
\end{table*}

\begin{table}[t]
\centering
\caption{Quantitative comparison of algorithms on the dynamic background video ($r=(30,3,3)$).}
\label{tab:comparison_dynamic_background}
\resizebox{\columnwidth}{!}{
\begin{tabular}{l cc c cc c cc c}
\toprule
& \multicolumn{3}{c}{\textbf{Mathematical}} & \multicolumn{2}{c}{\textbf{Structural}} & \multicolumn{2}{c}{\textbf{Perceptual}} & \textbf{Time} \\
\cmidrule(lr){2-4} \cmidrule(lr){5-6} \cmidrule(lr){7-8} \cmidrule(lr){9-9}
\textbf{Method} & RE $\downarrow$ & M-RMSE $\downarrow$ & PSNR $\uparrow$ & SSIM $\uparrow$ & MS-SSIM $\uparrow$ & LPIPS $\downarrow$ & VMAF $\uparrow$ & \makecell{Mean Time \\ (ms) $\downarrow$} \\
\midrule
TT-ALS                & 0.203 & 0.199 & 18.22 & 0.526 & 0.472 & 0.621 & 15.53 & 25177.4 \\
TT-ALS (Slice)        & \underline{0.112} & \underline{0.093} & \underline{23.62} & 0.601 & 0.737 & \underline{0.406} & 51.83 & 65.51 \\
TT-FOA                & 0.193 & 0.112 & 23.52 & \underline{0.702} & \underline{0.815} & 0.441 & \underline{53.49} & \underline{57.76} \\
Online TT-ALS (Ours) & \textbf{0.085} & \textbf{0.073} & \textbf{26.03} & \textbf{0.721} & \textbf{0.851} & \textbf{0.347} & \textbf{66.98} & \textbf{22.16} \\
\bottomrule
\end{tabular}
}
\end{table}

\paragraph{Qualitative Evaluation}
In addition to quantitative metrics, we visually compared the quality of the reconstructed images to assess their perceptual fidelity.
From Figures \ref{fig:visual_comparison_gray} and \ref{fig:visual_comparison_color}, we observe that TT-ALS and TT-ALS (Slice) generally exhibit blocking artifacts.
In particular, the contours of the person and the background textures are lost, resulting in a blurred appearance. Regarding TT-FOA, motion blur is evident in areas containing fine details, such as the fingertips of the moving person.

In contrast, the proposed method preserves the edges associated with the person's motion and reconstructs the background textures with high detail.
These visual results are consistent with the superior scores in structural and perceptual metrics, such as SSIM and LPIPS, reported earlier.
This confirms that the proposed method achieves high-quality video compression and reconstruction that is well-aligned with human visual characteristics.

\begin{figure*}[t]
    \centering
    \begin{minipage}{0.19\linewidth} \centering \small \textbf{Original} \end{minipage}
    \hfill
    \begin{minipage}{0.19\linewidth} \centering \small \textbf{TT-ALS} \end{minipage}
    \hfill
    \begin{minipage}{0.19\linewidth} \centering \small \textbf{TT-ALS (Slice)} \end{minipage}
    \hfill
    \begin{minipage}{0.19\linewidth} \centering \small \textbf{TT-FOA} \end{minipage}
    \hfill
    \begin{minipage}{0.19\linewidth} \centering \small \textbf{Online TT-ALS} \end{minipage}
    
    \vspace{1mm}

    \rotatebox{90}{\small \textbf{Low Rank}}\hspace{2mm}%
    \begin{subfigure}[c]{0.18\linewidth}
        \includegraphics[width=\linewidth]{results/fig/gray/office_original_frame350.pdf}
    \end{subfigure}
    \hfill
    \begin{subfigure}[c]{0.18\linewidth}
        \includegraphics[width=\linewidth]{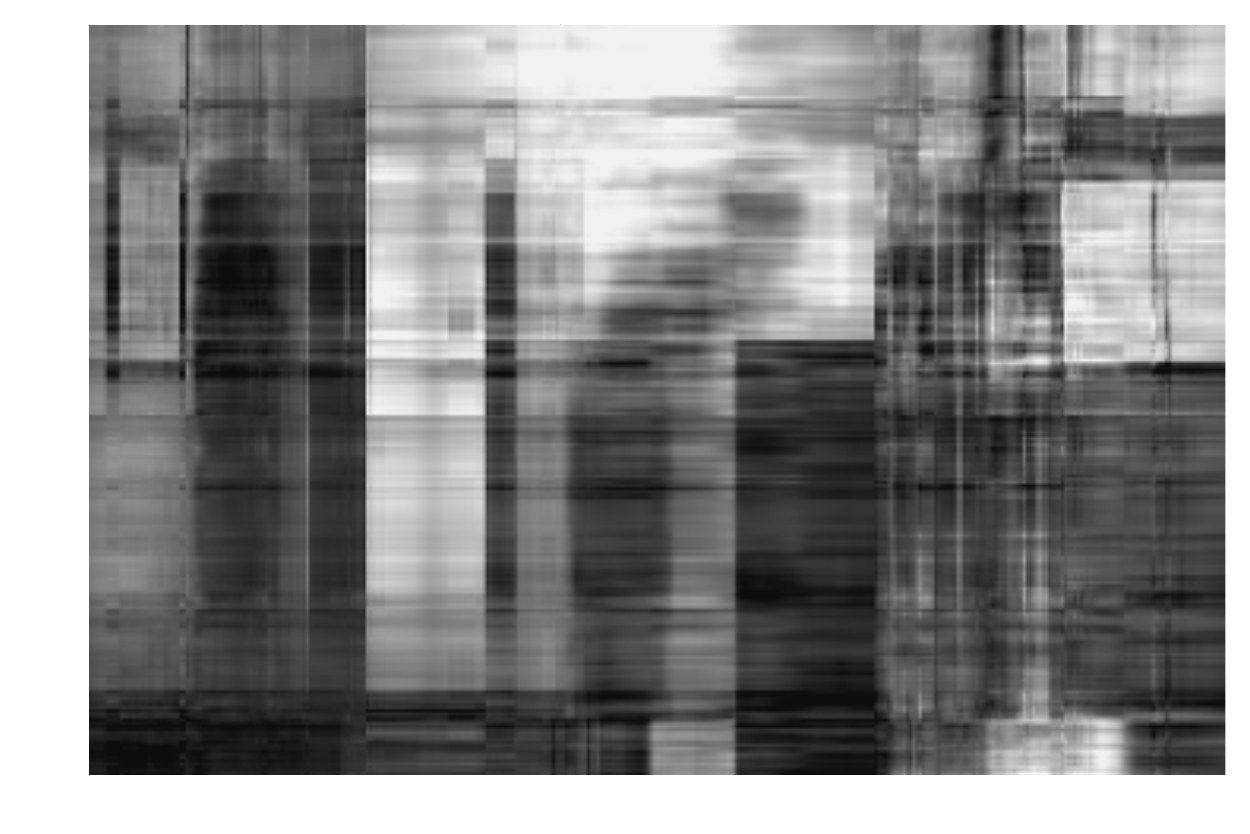}
    \end{subfigure}
    \hfill
    \begin{subfigure}[c]{0.18\linewidth}
        \includegraphics[width=\linewidth]{results/fig/gray/office_batchSliceALS_est_10_frame350.pdf}
    \end{subfigure}
    \hfill
    \begin{subfigure}[c]{0.18\linewidth}
        \includegraphics[width=\linewidth]{results/fig/gray/office_FOA_est_10_frame350.pdf}
    \end{subfigure}
    \hfill
    \begin{subfigure}[c]{0.18\linewidth}
        \includegraphics[width=\linewidth]{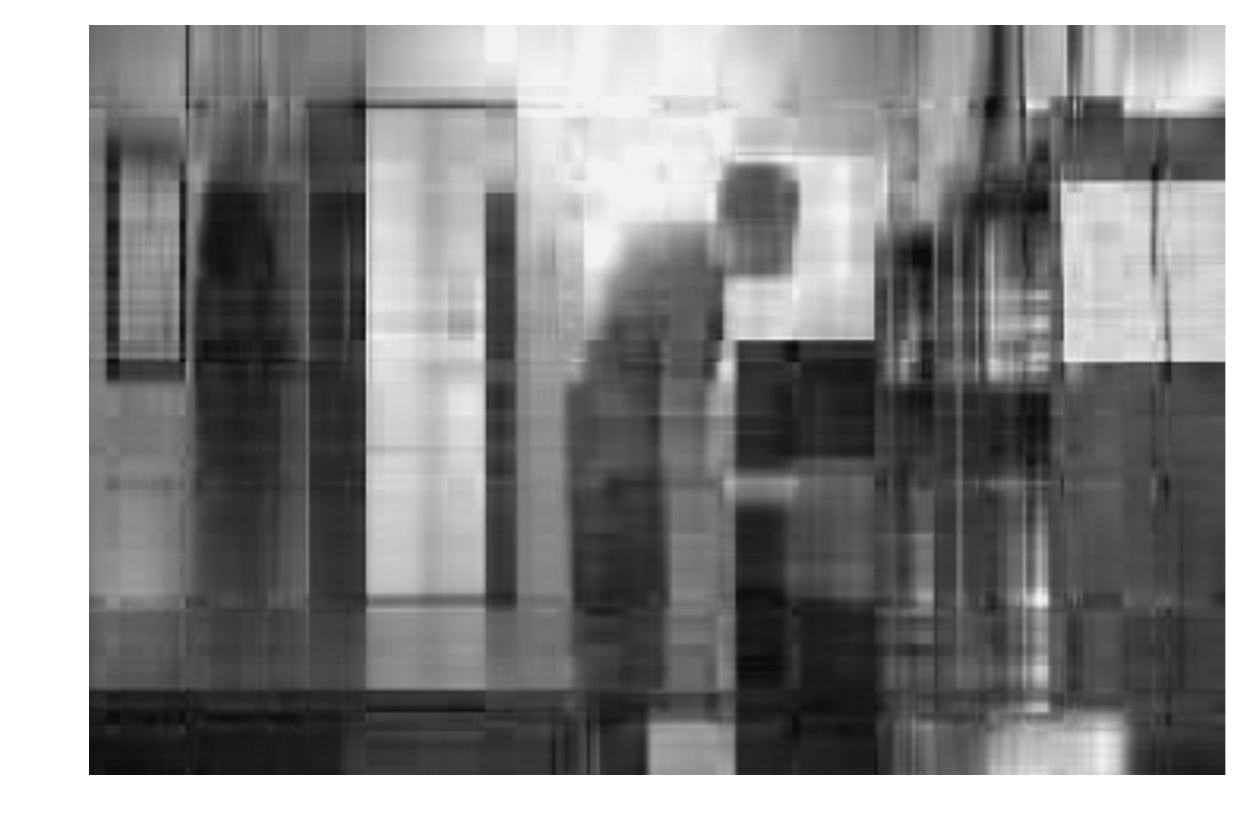}
    \end{subfigure}

    \vspace{2mm}

    \rotatebox{90}{\small \textbf{High Rank}}\hspace{2mm}%
    \begin{subfigure}[c]{0.18\linewidth}
        \includegraphics[width=\linewidth]{results/fig/gray/office_original_frame350.pdf}
    \end{subfigure}
    \hfill
    \begin{subfigure}[c]{0.18\linewidth}
        \includegraphics[width=\linewidth]{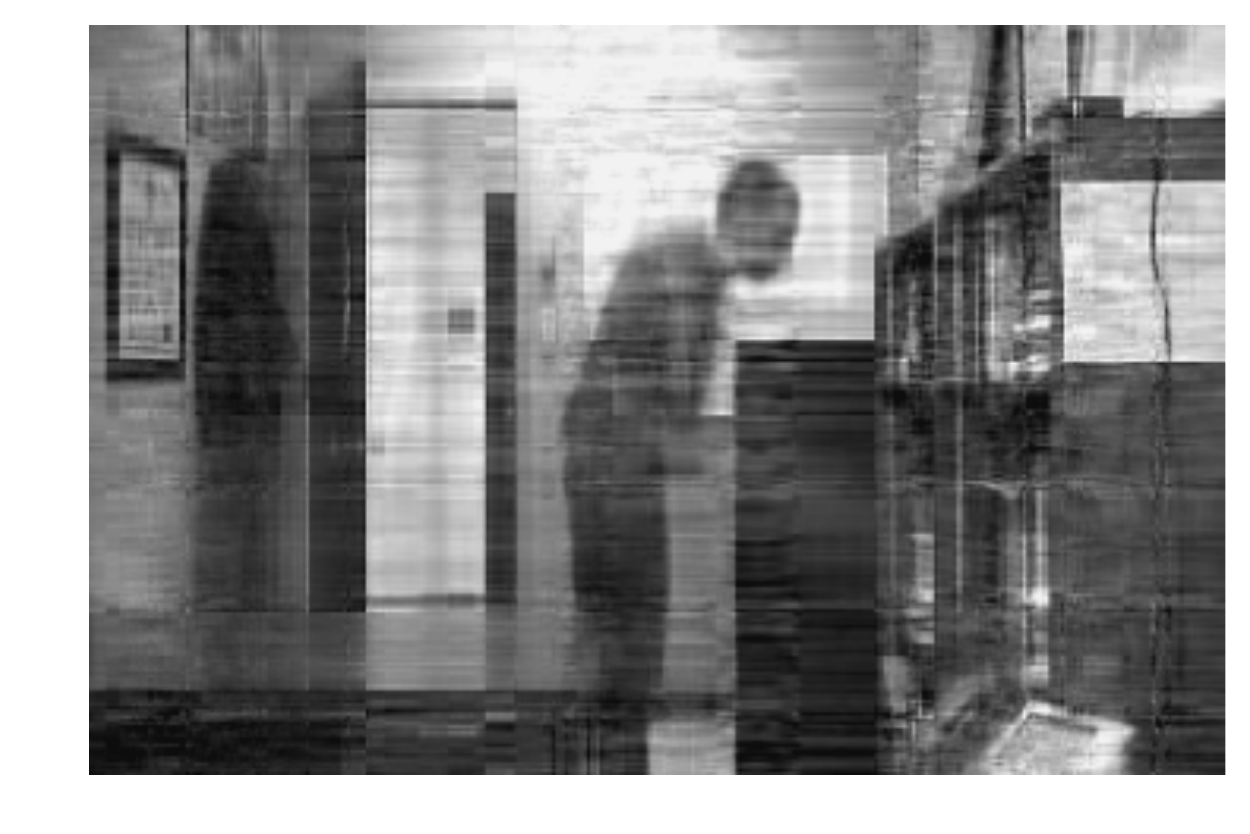}
    \end{subfigure}
    \hfill
    \begin{subfigure}[c]{0.18\linewidth}
        \includegraphics[width=\linewidth]{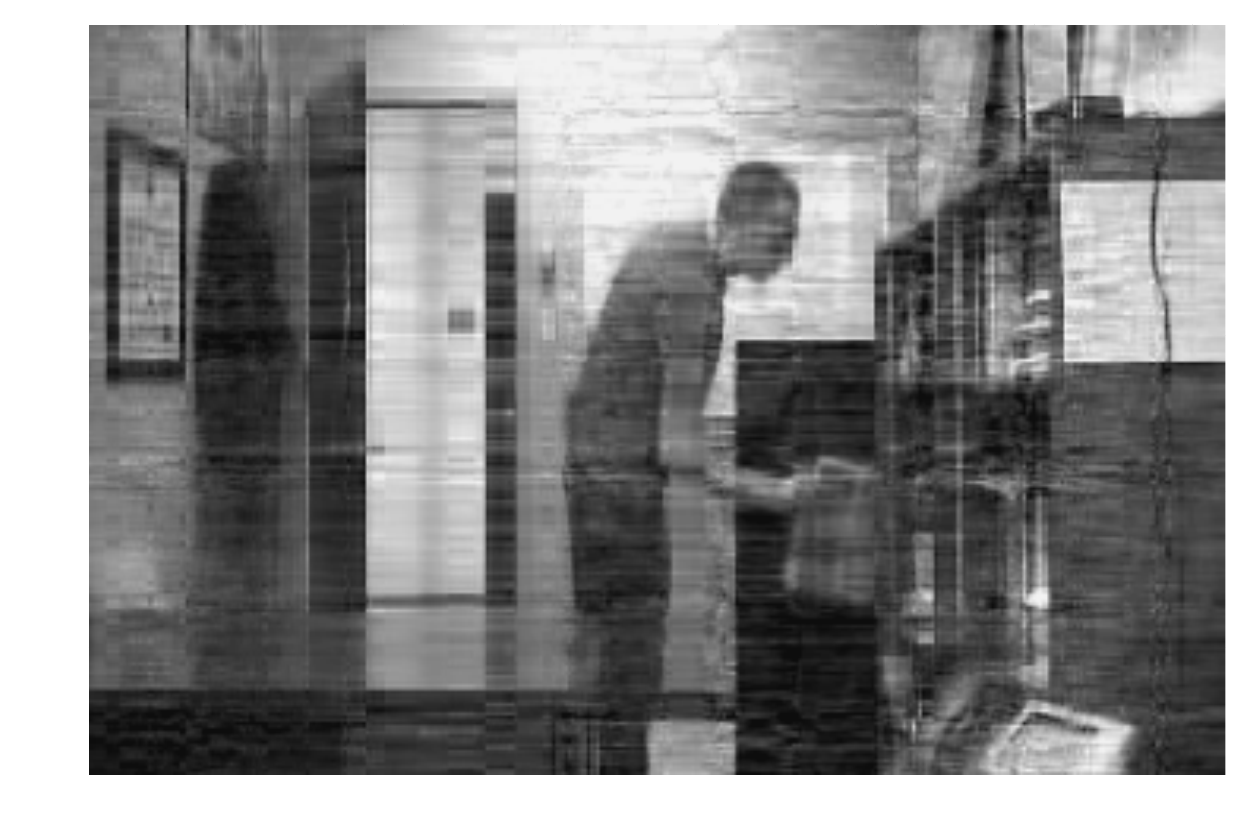}
    \end{subfigure}
    \hfill
    \begin{subfigure}[c]{0.18\linewidth}
        \includegraphics[width=\linewidth]{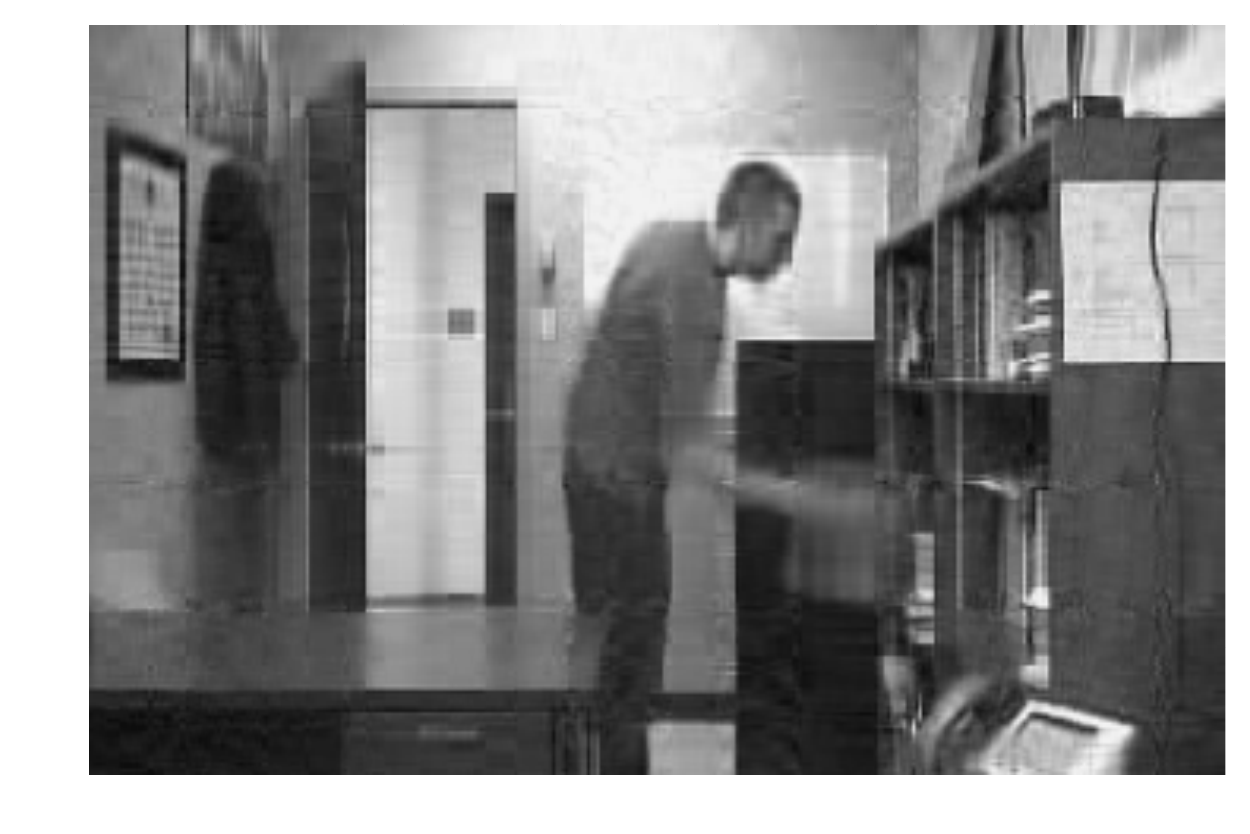}
    \end{subfigure}
    \hfill
    \begin{subfigure}[c]{0.18\linewidth}
        \includegraphics[width=\linewidth]{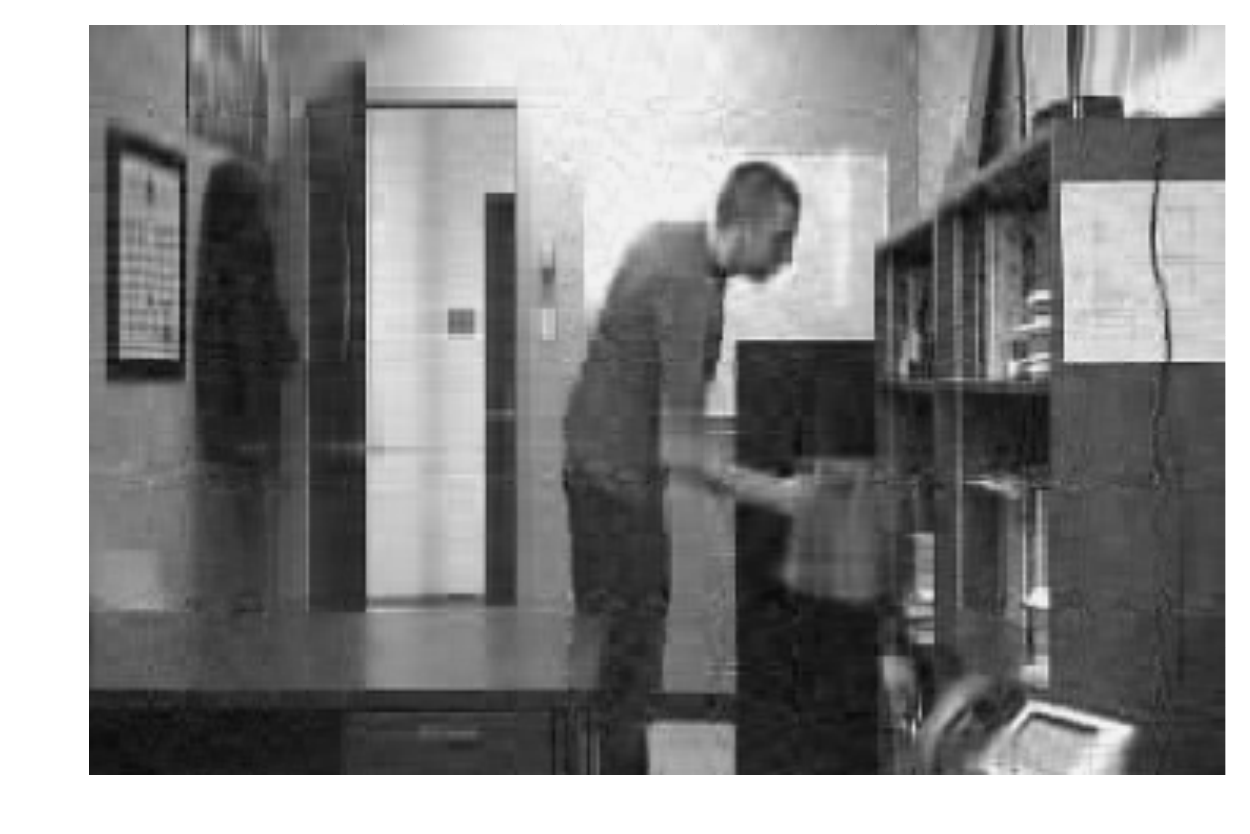}
    \end{subfigure}

    \caption{Visual comparison of reconstructed images at frame 350 of the grayscale video. The top row corresponds to the low-rank setting ($r=(10,10)$), and the bottom row corresponds to the high-rank setting ($r=(30,30)$).}
    \label{fig:visual_comparison_gray}
\end{figure*}

\begin{figure*}[t]
    \centering
    \begin{minipage}{0.19\linewidth} \centering \small \textbf{Original} \end{minipage}
    \hfill
    \begin{minipage}{0.19\linewidth} \centering \small \textbf{TT-ALS} \end{minipage}
    \hfill
    \begin{minipage}{0.19\linewidth} \centering \small \textbf{TT-ALS (Slice)} \end{minipage}
    \hfill
    \begin{minipage}{0.19\linewidth} \centering \small \textbf{TT-FOA} \end{minipage}
    \hfill
    \begin{minipage}{0.19\linewidth} \centering \small \textbf{Online TT-ALS} \end{minipage}
    
    \vspace{1mm}

    \rotatebox{90}{\small \textbf{Low Rank}}\hspace{2mm}%
    \begin{subfigure}[c]{0.18\linewidth}
        \includegraphics[width=\linewidth]{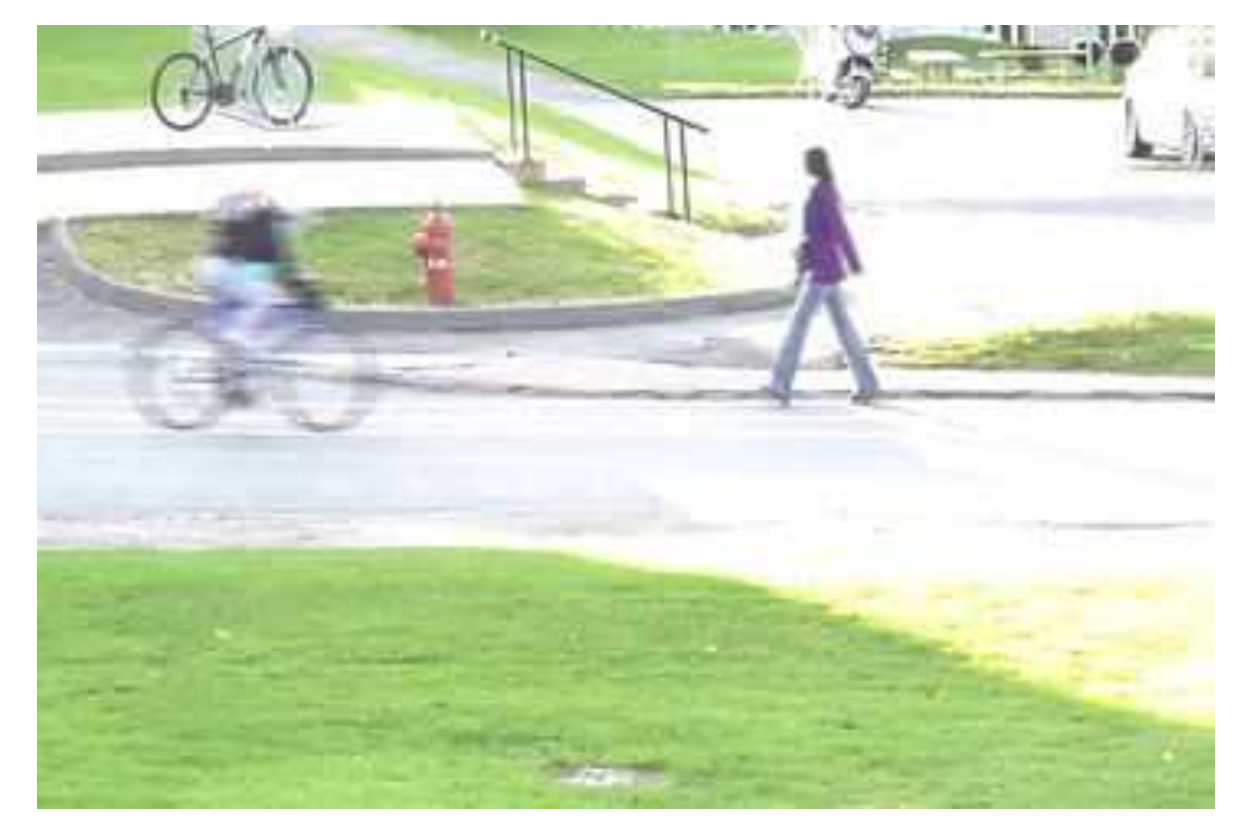}
    \end{subfigure}
    \hfill
    \begin{subfigure}[c]{0.18\linewidth}
        \includegraphics[width=\linewidth]{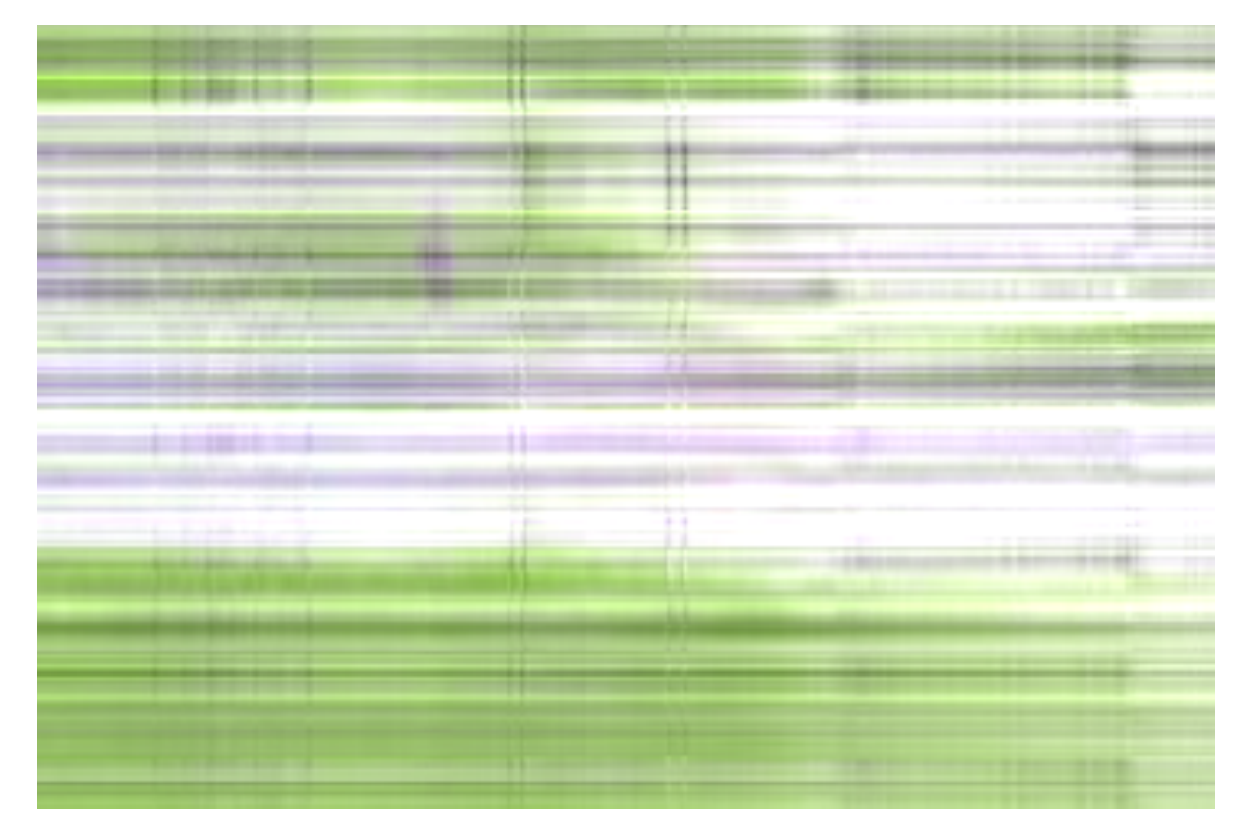}
    \end{subfigure}
    \hfill
    \begin{subfigure}[c]{0.18\linewidth}
        \includegraphics[width=\linewidth]{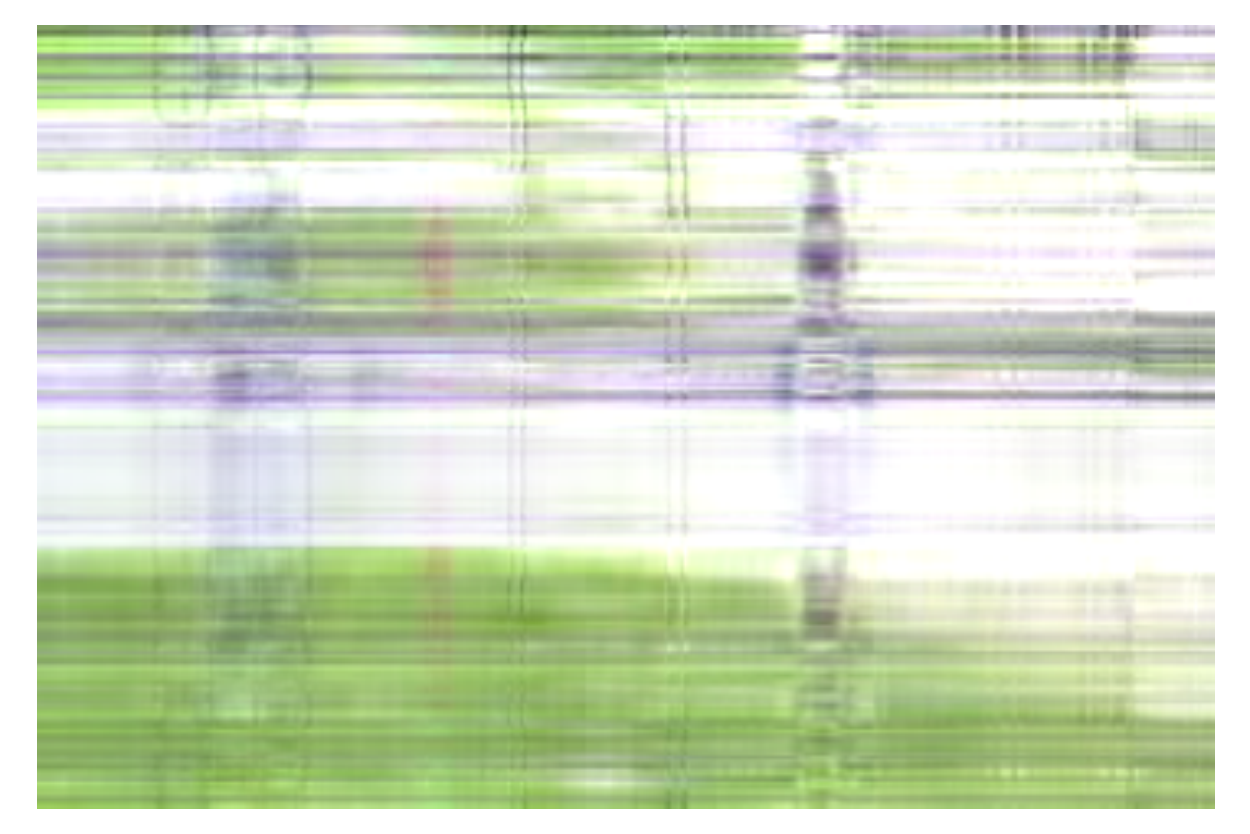}
    \end{subfigure}
    \hfill
    \begin{subfigure}[c]{0.18\linewidth}
        \includegraphics[width=\linewidth]{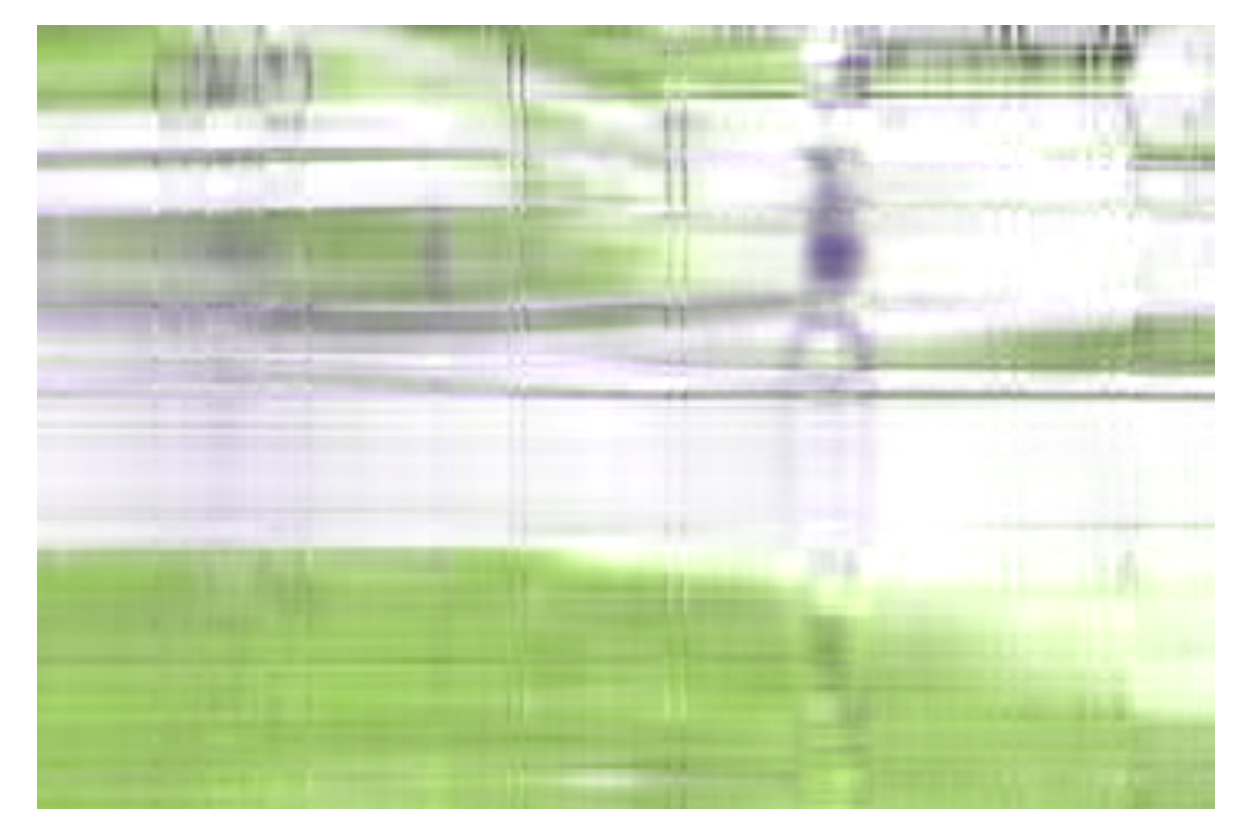}
    \end{subfigure}
    \hfill
    \begin{subfigure}[c]{0.18\linewidth}
        \includegraphics[width=\linewidth]{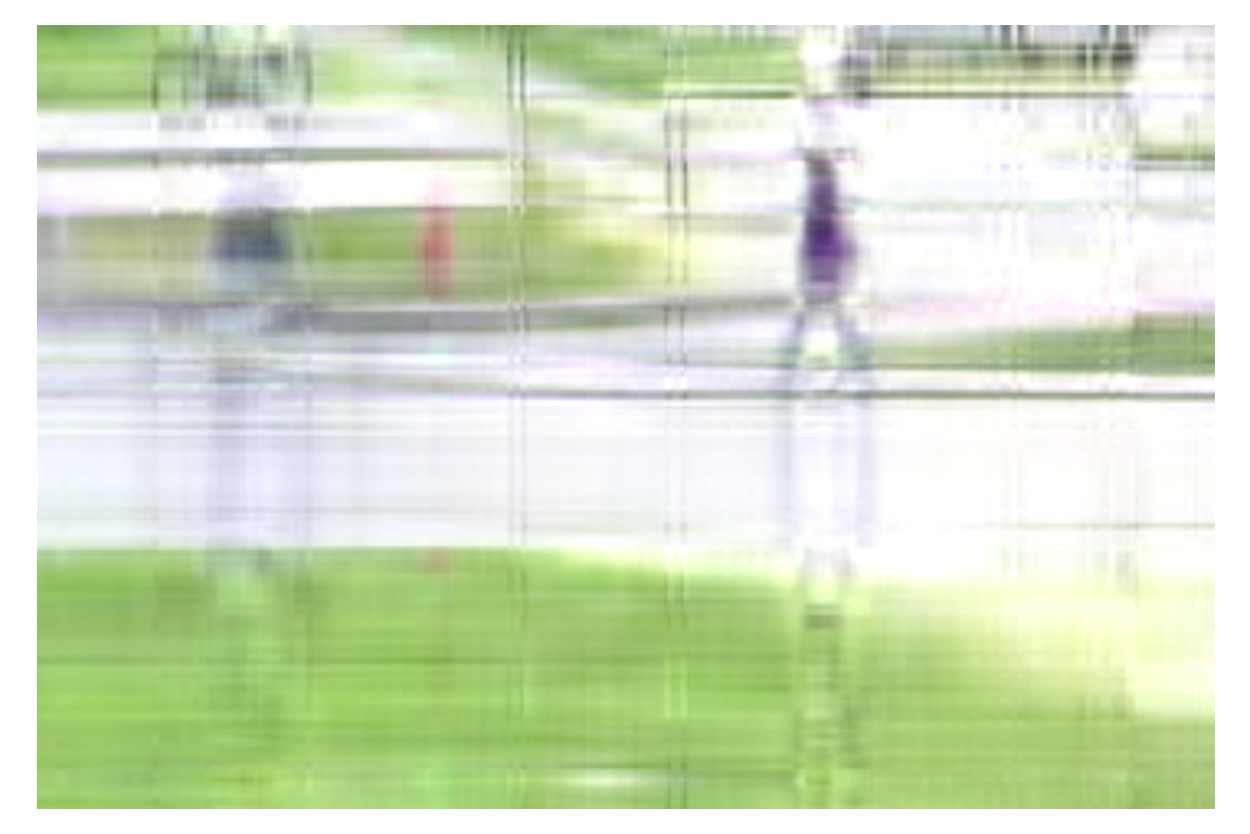}
    \end{subfigure}
    
    \vspace{2mm}

    \rotatebox{90}{\small \textbf{High Rank}}\hspace{2mm}%
    \begin{subfigure}[c]{0.18\linewidth}
        \includegraphics[width=\linewidth]{results/fig/color/pedestrians_original_frame220.pdf}
    \end{subfigure}
    \hfill
    \begin{subfigure}[c]{0.18\linewidth}
        \includegraphics[width=\linewidth]{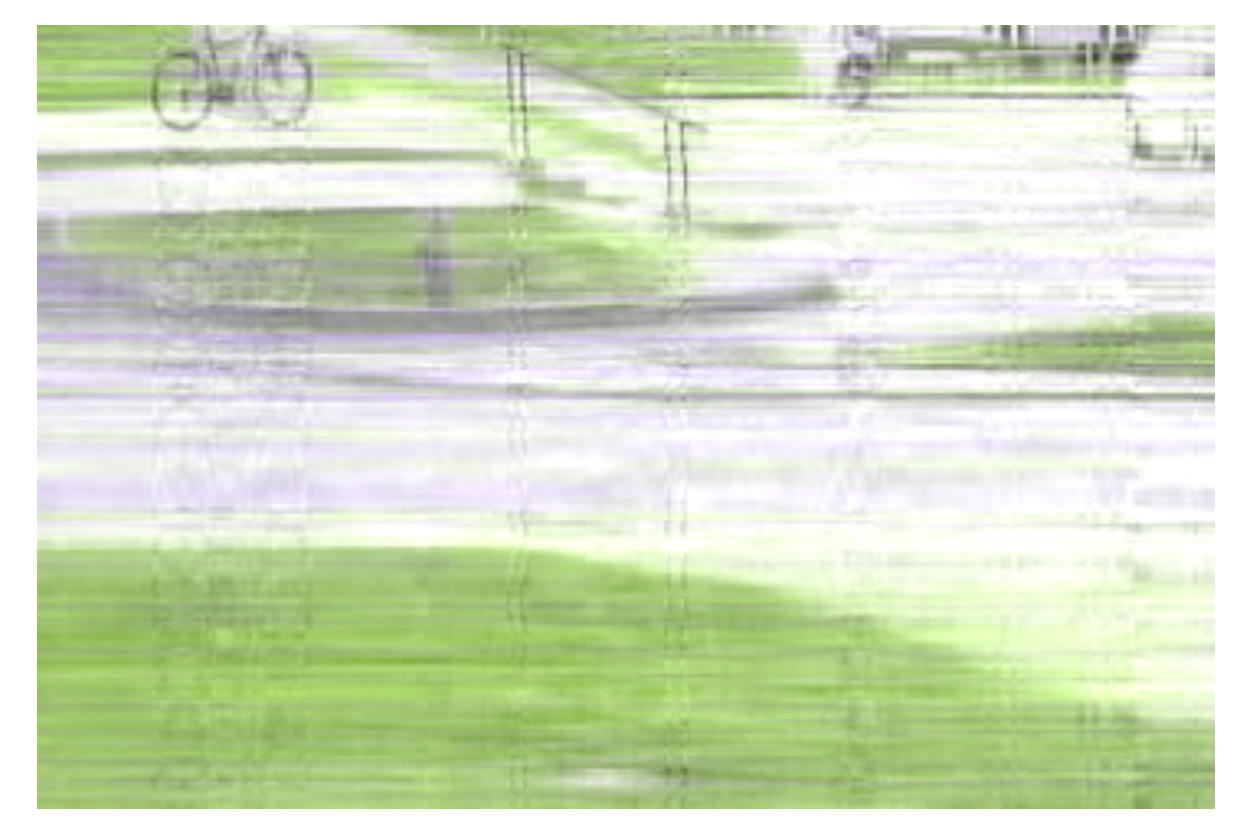}
    \end{subfigure}
    \hfill
    \begin{subfigure}[c]{0.18\linewidth}
        \includegraphics[width=\linewidth]{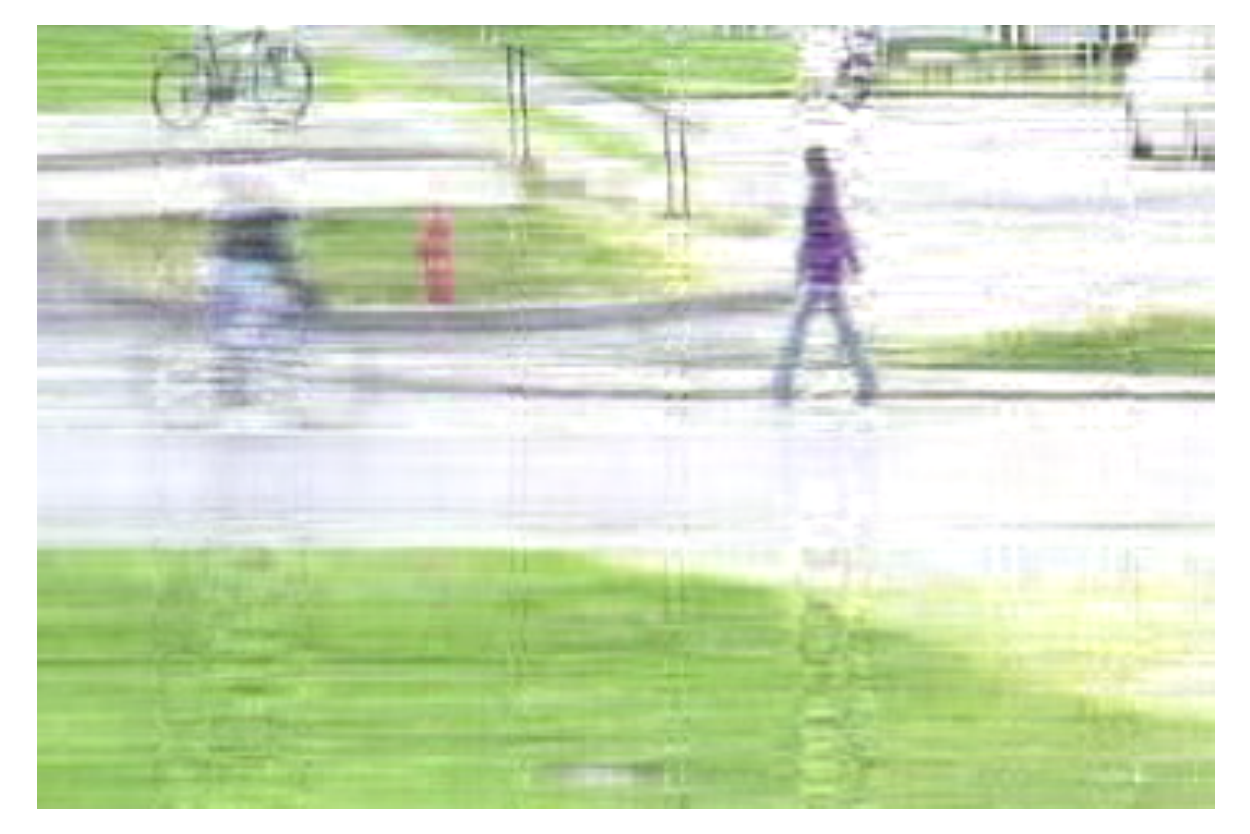}
    \end{subfigure}
    \hfill
    \begin{subfigure}[c]{0.18\linewidth}
        \includegraphics[width=\linewidth]{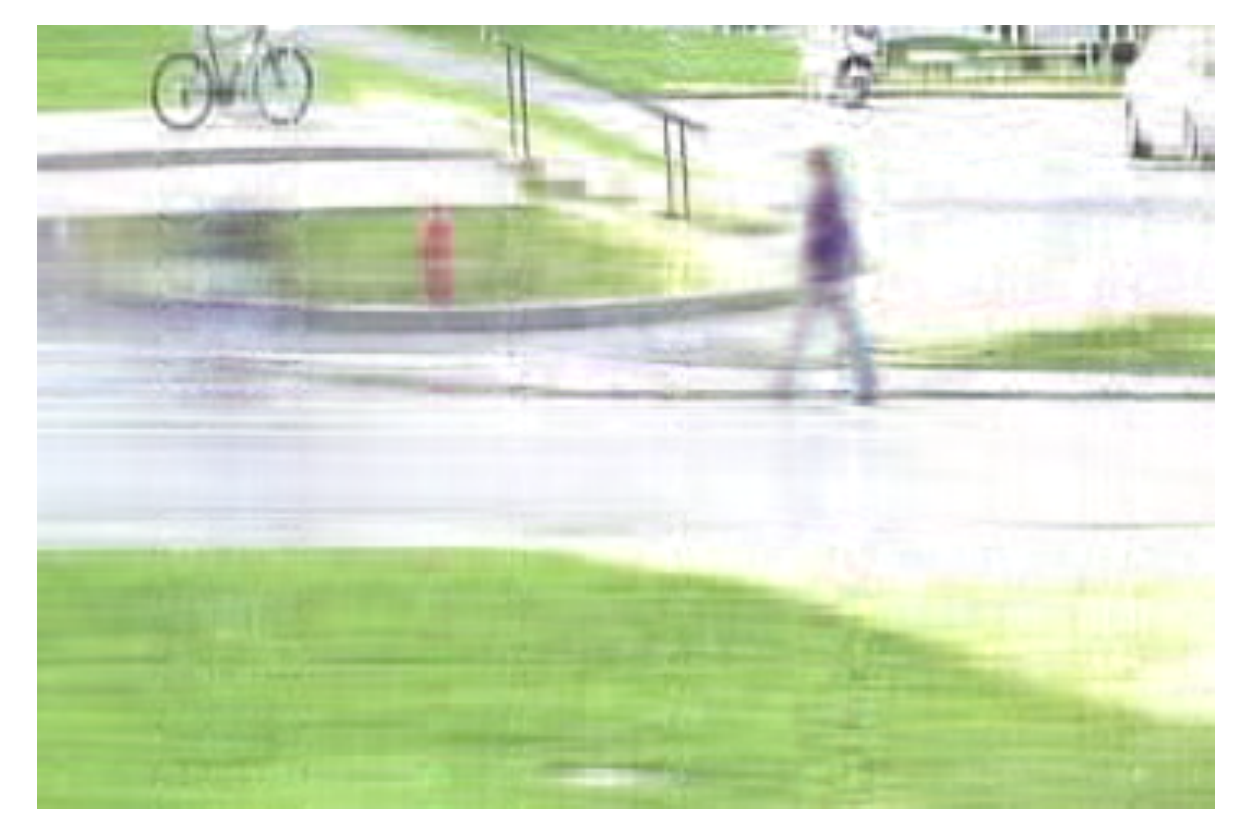}
    \end{subfigure}
    \hfill
    \begin{subfigure}[c]{0.18\linewidth}
        \includegraphics[width=\linewidth]{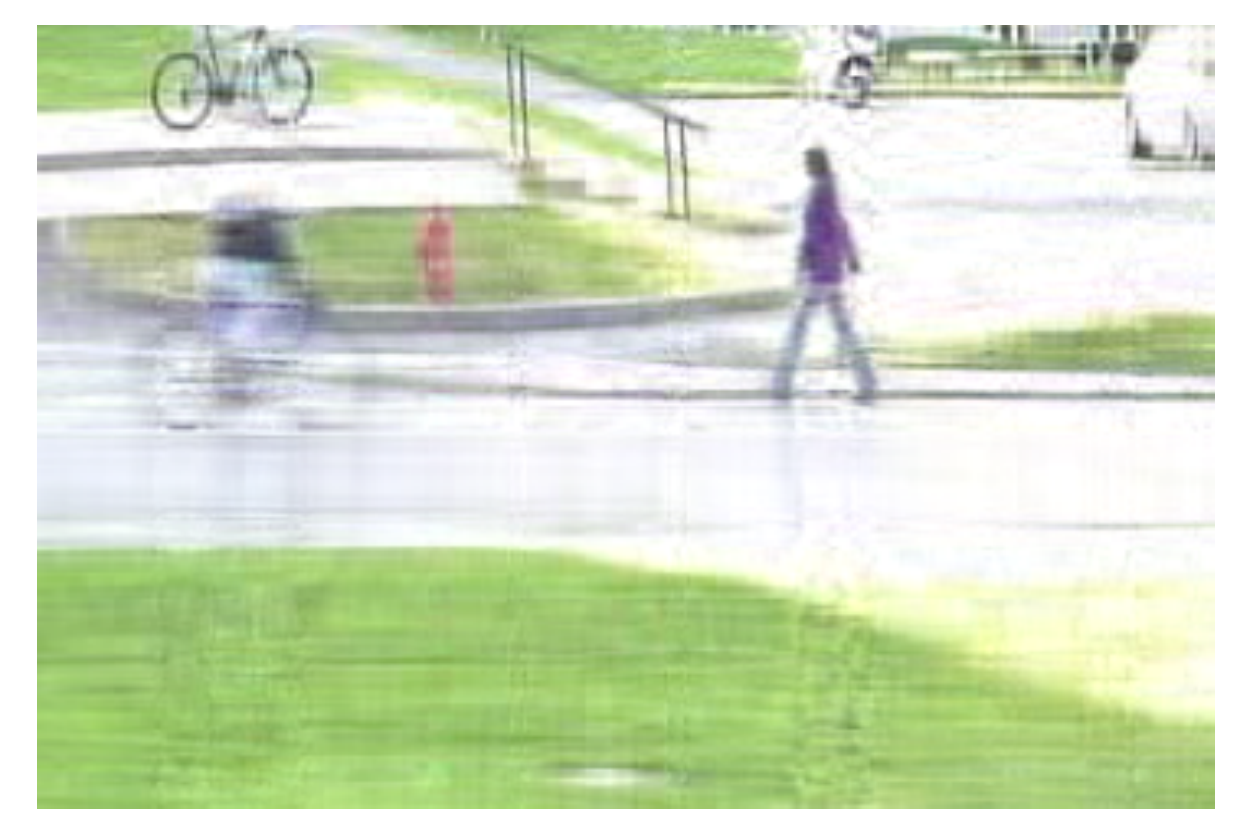}
    \end{subfigure}
    \caption{Visual comparison of reconstructed images at frame 220 of the color video. The top row corresponds to the low-rank setting ($r=(10,3,3)$), and the bottom row corresponds to the high-rank setting ($r=(30,3,3)$).}
    \label{fig:visual_comparison_color}
\end{figure*}

\paragraph{Tracking Stability}
Finally, beyond average metrics and visual snapshots, there is a fundamental difference in temporal tracking behavior between TT-FOA and our approach.
Because TT-FOA relies on randomized sampling and first-order approximations via Recursive Least Squares, it requires a warm-up period, often taking tens to hundreds of frames for the approximation error to fully converge.
By deterministically tracking the optimal subspace via exact single-sweep orthogonal updates, our method accurately tracks the streaming data from the initial frames without a warm-up delay, ensuring its reliability in real-time streaming applications.

\subsubsection{Latency Comparison with Deep Learning Paradigms}
Recent years have seen a surge in deep learning-based approaches to tensor representation.
To position our algebraic method within this broader landscape, we compared its computational latency against OFTD \citep{OFTD}, a state-of-the-art neural network-based continuous functional approximation framework.

Comparing an algebraic tensor decomposition directly with a neural network requires careful consideration, as they possess fundamentally different architectures.
To ensure a fair evaluation, we aligned our experimental setup with the constraints of the baseline.
Since the official implementation of OFTD is explicitly designed for third-order tensors, we conducted this comparison using the streaming grayscale video data ($240 \times 360 \times 500$) rather than the fourth-order color video.
Furthermore, because CP decomposition (utilized in OFTD) and Tensor Train decomposition have fundamentally different algebraic structures, directly matching rank values or parameter counts does not strictly establish equivalent conditions.
Therefore, we evaluated OFTD across a comprehensive range of CP-ranks ($R = 10, 20, 40$) and compared the outcomes with our proposed method under standard TT-ranks.

\begin{table}[t]
    \centering
    \caption{Quantitative comparison of approximation error and computational latency between OFTD and the proposed method on the grayscale video sequence.}
    \label{tab:comparison_oftd}
        \begin{tabular}{l l c r}
            \toprule
            \textbf{Method} & \textbf{Rank Setting} & \textbf{Average RE} $\downarrow$ & \textbf{Average Time / Frame} $\downarrow$ \\
            \midrule
            OFTD & CP-Rank $R=10$ & 0.131 & 26.16 s \ \ (26165.3 ms) \\
            OFTD & CP-Rank $R=20$ & 0.093 & 21.54 s \ \ (21548.8 ms) \\
            OFTD & CP-Rank $R=40$ & 0.065 & 22.48 s \ \ (22489.5 ms) \\
            \midrule
            Online TT-ALS (Ours) & TT-Rank $r=(10, 10)$ & 0.128 & 0.0027 s \ \ (2.756 ms) \\
            Online TT-ALS (Ours) & TT-Rank $r=(30, 30)$ & 0.068 & 0.0245 s \ \ (24.485 ms) \\
            \bottomrule
        \end{tabular}
\end{table}

The quantitative results and execution times are summarized in Table \ref{tab:comparison_oftd}.
The experimental outcomes highlight a fundamental difference in the computational paradigms.
While OFTD achieves reasonable approximation accuracy across various rank settings, its processing time consistently requires tens of seconds per frame.
This massive latency stems from its continuous learning mechanism.
Specifically, updating the network weights for each newly arriving frame requires many iterative forward and backward passes.

In contrast, our proposed Online TT-ALS completes the state update in $2.7$ to $24.5$ milliseconds per frame, achieving a speedup by a factor of $10^3$ to $10^4$.
Unlike deep learning models that require iterative backpropagation, our method achieves this efficiency through deterministic, single-sweep orthogonal updates.

Ultimately, these results clearly demonstrate that while neural implicit representations and streaming tensor completion frameworks are exceptionally powerful for imputing missing data, they operate under a completely different latency regime.
For the real-time processing of high-dimensional streaming data, the proposed framework provides a practical, robust, and scalable solution.

\subsection{Ablation Study on Orthogonality}
A core theoretical contribution of our proposed framework is the exact subspace tracking enabled by the sequential orthogonalization of the TT cores.
To empirically validate the necessity of this mechanism, we conducted an ablation study using the \textit{continuousPan} sequence under the high-rank setting $r=(30, 3, 3)$.
We compared the full proposed method against a variant, denoted by ``w/o Ortho,'' where the LQ/QR orthogonalization steps were removed during the sequential updates.

As shown in Table \ref{tab:ablation_ortho}, while both methods yield identical approximation errors at the beginning of the sequence ($t=10$), the performance of the variant without orthogonality degrades substantially as time progresses.
By $t=500$, the relative error of the non-orthogonal version increases significantly to $0.203$.
This degradation is driven by numerical instability and the resulting ill-conditioned Gram matrices that arise when core tensors are repeatedly updated without normalization. 

In contrast, the proposed method maintains a consistently low error ($\approx 0.100$) throughout the sequence.
This result indicates that enforcing orthogonality is an essential mechanism for preventing error accumulation and ensuring stable, long-term tracking in online streaming environments.

\begin{table}[t]
    \centering
    \caption{Ablation study on the effect of the orthogonality constraint for the \textit{continuousPan} sequence ($r=(30,3,3)$). The table presents the Relative Error (RE) over time $t$.}
    \label{tab:ablation_ortho}
        \begin{tabular}{l c c c}
            \toprule
            \textbf{Method} & $\mathbf{t=10}$ & $\mathbf{t=100}$ & $\mathbf{t=500}$ \\
            \midrule
            Proposed (w/o Ortho) & 0.097 & 0.147 & 0.203 \\
            Proposed (w/ Ortho) & 0.097 & 0.100 & 0.100 \\
            \bottomrule
        \end{tabular}
\end{table}

\section{Conclusion}
\label{sec:conclusion}
In this paper, we proposed Online TT-ALS, an algebraic algorithm for the low-latency compression and tracking of high-dimensional streaming tensor data.
We established the theoretical foundation for our approach, proving that enforcing orthogonal gauge constraints guarantees monotonic decrease of the local objective function and temporal smoothness.
Furthermore, our computational analysis demonstrated that maintaining this orthogonal subspace decouples the optimization process, resulting in an $\mathcal{O}(I^{n-1} r)$ computational complexity.
By scaling linearly rather than quadratically with respect to the TT-rank, our deterministic method reduces computational overhead compared to existing first-order approximations, avoiding the need for randomized sampling while ensuring stable subspace tracking.

Experimental results support these theoretical and computational advantages.
Our proposed framework exhibited scalability on high-order streaming tensors, successfully operating where conventional batch methods fail due to memory exhaustion.
On real-world video sequences, including those with dynamic camera motion, Online TT-ALS achieved high approximation accuracy and perceived visual quality (e.g., VMAF and SSIM) from the initial frames without requiring a warm-up period.
Furthermore, compared to deep learning-based continuous functional approximations, our algebraic method achieves speedups of several orders of magnitude by avoiding the overhead of iterative backpropagation.
Additionally, our ablation study empirically confirmed that the orthogonality constraint is essential for preventing error accumulation and ill-conditioned Gram matrices, thereby ensuring long-term numerical stability in online environments.

For future work, we plan to explore adaptive TT-rank determination mechanisms.
By dynamically adjusting the ranks to varying data complexities during online processing, we aim to enhance our method's efficiency for deployments on memory- and resource-constrained devices.
Additionally, we intend to extend the application of our framework to diverse types of high-dimensional streaming data beyond video processing.

\section*{Code Availability}
The source code and Julia notebooks used to reproduce the experimental results presented in this paper are publicly available at \url{https://github.com/hirokin0919/Online-TT-ALS}.

\section*{Acknowledgments}
This work was supported by JSPS KAKENHI Grant Numbers 21K18301, 24K02951, 24K00540, 25H00449, and 25K21806.

\bibliography{main}
\bibliographystyle{unsrtnat}
\end{document}